\definecolor{light-blue}{rgb}{0.8,0.85,1}
\definecolor{light-grey}{gray}{0.75}
\newcommand{\intset}[1]{{#1}}
\newcommand{\divisors}{\intset{N}}
\newcommand{\relset}[1]{{\mathsf #1}}
\renewcommand{\vec}[1]{\relset{#1}}
\newcommand{\uset}[1]{\underline{#1}}
\newcommand{\udg}[1]{\overline{#1}}    
\renewcommand{\dag}[1]{\overrightarrow{#1}}    
\newcommand{\polyset}[1]{{\mathcal #1}}
\newcommand{\pP}[1]{\polyset{P}_{#1}}         
\newcommand{\pDa}[1]{\polyset{D}_{#1}}        
\newcommand{\pD}[1]{\polyset{D}_{#1}}     
\newcommand{\pT}[1]{\polyset{T}_{#1}}     
\newcommand{\pE}[1]{\polyset{E}_{#1}}     
\DeclareMathOperator{\len}{len}
\DeclareMathOperator{\mult}{mult}
\newcommand{\refine}{\mathbin{/\!\!/}}
\newcommand{\mat}{\mathcal{M}}
\newcommand{\nat}{\mathcal{N}}
\begin{document}

\title{Tame Decompositions and Collisions}
\pdftitle{Tame Decompositions and Collisions}
\author{
Konstantin Ziegler\\
B-IT, Universit\"at Bonn\\
D-53113 Bonn, Germany\\
\email{zieglerk@bit.uni-bonn.de}\\
\url{http://cosec.bit.uni-bonn.de/}
}
\pdfauthor{Konstantin Ziegler}

\maketitle


\begin{abstract}
  A univariate polynomial $f$ over a field is decomposable if $f= g
  \circ h= g(h)$ for nonlinear polynomials $g$ and $h$.  It is
  intuitively clear that the decomposable polynomials form a small
  minority among all polynomials over a finite field.  The tame case,
  where the characteristic $p$ of $\Fq$ does not divide $n = \deg f$,
  is fairly well-understood, and we have reasonable bounds on the
  number of decomposables of degree $n$.  Nevertheless, no exact
  formula is known if $n$ has more than two prime factors.  In order to count
  the decomposables, one wants to know, under a suitable
  normalization, the number of collisions, where essentially different
  $(g, h)$ yield the same $f$.  In the tame case, Ritt's Second Theorem
  classifies all 2-collisions.

  We introduce a normal form for multi-collisions of decompositions of
  arbitrary length with exact description of the (non)uniqueness of
  the parameters.  We obtain an efficiently computable formula for the
  exact number of such collisions at degree $n$ over a finite field of
  characteristic coprime to $p$.  This leads to an algorithm for the exact number of decomposable polynomials at degree $n$
  over a finite field $\Fq$ in the tame case.
\end{abstract}






\begin{sagesilent}
load('../../code/decode.sage')
\end{sagesilent}

\section{Introduction}
\label{sec:introduction}

The \emph{composition} of two univariate polynomials $g,h \in F[x]$ over a field
$F$ is denoted as $f= g \circ h= g(h)$, and then $(g,h)$ is a
\emph{decomposition} of $f$, and $f$ is \emph{decomposable} if $g$ and
$h$ have degree at least $2$.  In the 1920s, Ritt, Fatou, and Julia
studied structural properties of these decompositions over
$\mathbb{C}$, using analytic methods. Particularly important are two
theorems by Ritt on the uniqueness, in a suitable sense, of
decompositions, the first one for (many) indecomposable components and
the second one for two components, as above.  \cite{eng41} and
\cite{lev42} proved them over arbitrary fields of characteristic zero
using algebraic methods.

The theory was extended to arbitrary characteristic by \cite{frimac69}, \cite{dorwha74}, \cite{sch82c,
  sch00c}, \cite{zan93}, and others. Its use in a cryptographic
context was suggested by \cite{cad85}. In computer algebra, the
decomposition method
of \cite{barzip85} requires exponential time.  A fundamental
dichotomy is between the \emph{tame case}, where the characteristic
$p$ does not divide $\deg g$, and the \emph{wild case}, where $p$
divides $\deg g$, see \cite{gat90d,gat90c}.
A breakthrough result of \cite{kozlan89} was their
polynomial-time algorithm to compute tame decompositions; see also
\cite*{gatkoz87}; \cite*{kozlan96}; \cite{gutsev06},
and the survey articles of \cite{gat02c} and
\cite{gutkoz03} with further references.

Schur's
conjecture, as proven by \cite{tur95}, offers a natural connection
between indecomposable polynomials with degree coprime to $p$ and
certain absolutely irreducible bivariate polynomials.
On a different, but related topic, \cite{avazan03} study ambiguities in the
decomposition of rational functions over $\mathbb{C}$.

It is intuitively clear that the univariate decomposable polynomials
form only a small minority among all univariate polynomials over a
field.  A set of distinct decompositions of $f$ is called a
\emph{collision}. The number of decomposable polynomials of degree $n$
is thus the number of all pairs $(g,h)$ with $\deg g \cdot \deg h = n$
reduced by the ambiguities introduced by collisions.  An important
tool for estimating the number of collisions is Ritt's Second Theorem.
Ritt worked with $F=\mathbb{C}$ and used analytic
methods. Subsequently, his approach was replaced by algebraic methods
and Ritt's Second Theorem was also shown to hold in positive
characteristic $p$. The original versions of this required
$p>\deg(g\circ h)$. \cite{zan93} reduced this to the milder and more
natural requirement $g' \neq 0$ for all $g$ in the collision.  His
proof works over an algebraic closed field, and Schinzel's
(\citeyear{sch00c}) monograph adapts it to finite fields.

The task of counting compositions over a finite field of
characteristic $p$ was first considered in
\cite{gie88b}.
\Citet{gat08c} presents general approximations to the
number of decomposable polynomials. These come with satisfactory
(rapidly decreasing) relative error bounds except when $p$ divides $n
= \deg f$ exactly twice.  \cite{blagat13} determine exactly the
number of decomposable polynomials in one of these difficult cases,
namely when $n = p^{2}$.

\cite{zan08} studies a different but related question, namely
compositions $f= g \circ h$ in $ \mathbb{C} [x]$ with a \emph{sparse}
polynomial $f$, having $t$ terms. The degree is not bounded.  He gives
bounds, depending only on $t$, on the degree of $g$ and the number of
terms in $h$.  Furthermore, he gives a parametrization of all such
$f$, $g$, $h$ in terms of varieties (for the coefficients) and
lattices (for the exponents).  \citet{boddeb09} also deal with
counting.

\cite{ziemue08} derive an efficient method for describing all complete
decompositions of a polynomial, where all components are
indecomposable.  This turns Ritt's First Theorem into an applicable
form and \cite{medsca14} combine this approach with results from model
theory to describe the subvarieties of the $k$-dimensional affine space
that are preserved by a coordinatewise polynomial map.  Both works
lead to slightly different canonical forms for the complete
decomposition of a given polynomial.  \cite{ziemue08} employ
\emph{Ritt moves}, where adjacent indecomposable $g,h$ in a complete
decomposition are replaced by $g^{*},h^{*}$ with the same composition,
but $\deg g = \deg h^{*} \neq \deg h = \deg g^{*}$.  Such collisions
are the theme of Ritt's Second Theorem and \cite{gat12a} presents a
normal form  with an exact description of the
(non)uniqueness of the parameters.

Our work combines the ``normalizations'' of Ritt's theorems by
\cite{ziemue08} and \cite{gat12a} to classify collisions of two or
more decompositions, not necessarily complete and of arbitrary length.
We make the following contributions.
\begin{itemize}
\item We obtain a normal form for collisions described by a
  set of degree sequences for (possibly incomplete)
  decompositions. (\autoref{thm:split} and \autoref{thm:class})
\item The (non)uniqueness of the parameters leads to an exact formula
  for the number of such collisions over a finite field with characteristic coprime
  their degree. (\autoref{thm:count})
\item We conclude with an efficient algorithm for the number
  of decomposable polynomials at degree $n$ over a finite field of
  characteristic coprime $n$. (\autoref{algo:count})
\end{itemize}
The latter extends the explicit formulae of \cite{gat08c} for $n$ a
semiprime or the cube of a prime.

We proceed in three steps.  In \autoref{sec:notation}, we introduce
notation and establish basic relations.  In \autoref{sec:structure},
we introduce the \emph{relation graph} of a set of collisions which
captures the necessary order and possible Ritt moves for any in
decomposition.  This leads to a complete classification of collisions
by \autoref{thm:split} and \autoref{thm:class}.  We conclude
with the corresponding formula for the number of such collisions over
a finite field and the corresponding procedure in \autoref{sec:counting}.

\section{Notation and Preliminaries}
\label{sec:notation}

A nonzero
polynomial $f\in F[x]$ over a field $F$ of characteristic $p \geq 0$ is \emph{monic} if
its leading coefficient $\operatorname{lc}(f)$ equals $1$.  We call $f$
\emph{original} if its graph contains the origin, that is, $f(0)=0$.
  For $g, h \in F[x]$,
\begin{equation}
  \label{defComp:f}
  f = g \circ h = g(h) \in F[x]
\end{equation}
  is their \emph{composition}.  If $\deg g, \deg h \geq 2$, then
  $(g,h)$ is a \emph{decomposition} of $f$. A polynomial $f \in F[x]$
  is \emph{decomposable} if there exist such $g$ and $h$, otherwise
  $f$ is \emph{indecomposable}.  A decomposition \eqref{defComp:f} is \emph{tame} if $p\nmid \deg g$, and
$f$ is \emph{tame} if $p\nmid \deg f$.

Multiplication by a unit or addition of a constant does not change
decomposability, since
\begin{equation}
  \label{eq:82}
  f = g \circ h \Longleftrightarrow a f+b = (a g+b) \circ h
\end{equation}
for all $f$, $g$, $h$ as above and $a,b \in F$ with $a\neq 0$.  In
other words, the set of decomposable polynomials is invariant under
this action of $F^{\times} \times F$ on $F[x]$.  Furthermore, any
decomposition $(g,h)$ can be normalized by this action, by taking $a =
\operatorname{lc} (h)^{-1} \in F^{\times}$, $b=-a \cdot h(0) \in F$,
$g^{*} = g((x-b)a^{-1}) \in F[x]$, and $h^{*} = ah+b$.  Then $g\circ h
= g^{*} \circ h^{*}$ and $g^{*}$ and $ h^{*}$ are monic original.

It is therefore sufficent to consider compositions $f = g \circ h$
where all three polynomials are monic original.  For $n \geq
1$ and any positive divisor $d$ of $n$, we write
\begin{align}
  \pP{n}(F) & = \{f \in F[x] \colon \text{$f$ is monic original of
    degree $n$} \}, \\
\pDa{n}(F) & = \{f \in \pP{n} \colon \text{$f$ is decomposable} \}, \\
\pD{n,d}(F) & = \{f \in \pP{n} \colon f = g \circ h \text{
  for some $(g, h) \in \pP{d} \times \pP{n/d}$} \}.
\end{align}
We sometimes leave out $F$ from the notation
when it is clear from the context and have over a finite field
$\Fq$ with $q$ elements,
\begin{equation}
  \label{eq:42}
  \# \pP{n} = q^{n-1}. \tag{2.1a}
\end{equation}
It is well known that in a tame decomposition, $g$ and $h$ are
uniquely determined and we have over $\Fq$
\begin{equation}
  \label{eq:20}
  \# \pD{n, d} = q^{n+n/d-2}
\end{equation}
if $n$ is coprime to $p$.

The set $\pDa{n}$ of all decomposable polynomials in $\pP{n}$
satisfies
\begin{equation}
  \label{eq:8}
  \pDa{n}= \bigcup_{\substack{d\mid n\\1<d<n}} \pD{n,d}.
\end{equation}
In particular, $\pDa{n} = \varnothing$ if $n$ is prime. Our
collisions turn up in the resulting inclusion-exclusion formula for
$\# \pDa{n}$ if $n$ is composite.

Let $N = \{ 1< d <n \colon d \mid n \}$ be the set of nontrivial
divisors of $n$ and $D \subseteq N$ a
nonempty subset of size $k$.  This defines a set
\begin{equation}
  \label{eq:22}
  \pD{n, D} = \bigcap_{d \in D} \pD{n, d}
\end{equation}
of \emph{$k$-collisions}.  We obtain from \eqref{eq:8} the inclusion-exclusion formula
\begin{equation}
\label{eq:9}
  \# \pDa{n} = \sum_{k \geq 1} (-1)^{k+1} \sum_{\substack{D \subseteq \divisors \\ \# D = k}} \# \pD{n, D}.
\end{equation}
For $\# D = 1$, the size of $\pD{n, D}$ is given in \eqref{eq:20}.
For $\# D = 2$, the central tool for understanding is Ritt's Second
Theorem as presented in the next subsection.

For $f \in P_{n} (F)$ and $a \in F$, the \emph{original shift} of $f$ by $a$ is
\begin{equation}
  \label{eq:84}
  f^{[a]} = (x-f(a)) \circ f \circ (x+a) \in \pP{n}(F).
\end{equation}
Original shifting defines a group action of the additive group of $F$ on $\pP{n}(F)$.  Shifting respects decompositions in the sense that
for each decomposition $(g,h)$ of $f$ we have a decomposition $(g^{[h(a)]}, h^{[a]})$ of $f^{[a]}$, and vice versa.
We denote $(g^{[h(a)]}, h^{[a]})$ as $(g, h)^{[a]}$.  The stabilizer
of a monic original polynomial $f$ under original shifting is $F$ if
$f$ is linear and $\{0\}$ otherwise.
\noeqref{eq:84}

\subsection{Normal Form for Ritt's Second Theorem}
\label{sec:ritt-2}

In the 1920s, Ritt, Fatou, and Julia investigated the composition
$f=g\circ h=g(h)$ of univariate polynomials over a field $F$ for $F =
\mathbb{C}$. It emerged as an important question to determine the
collisions (or nonuniqueness) of such decompositions, that is,
different components $(g,h)\neq(g^{*},h^{*})$ with equal composition
$g\circ h=g^{*}\circ h^{*}$ and equal sets of degrees: $\deg g = \deg
h^{*} \neq \deg h = \deg g^{*}$.

\cite{rit22} presented two types of essential collisions:
\begin{align}
x^{e}\circ x^{k}w(x^{e}) & =
x^{ke}w^{e}(x^{e})=x^{k}w^{e}\circ
x^{e}, \label{eq:76} \\
T_{d}^{*}(x,z^{e})\circ T^{*}_{e}(x,z) & = T^{*}_{e d}(x,z)=T^{*}_{e}(x,z^{d})\circ
T_{d}^{*}(x,z), \label{eq:28}
\end{align}
where $w\in F[x]$, $z\in F^{\times} = F \setminus \{0\}$, and $T_{d}^{*}$
is the $d$th \emph{Dickson polynomial of the first kind}. And then he proved
that these are all possibilities up to composition with linear polynomials.
This involved four unspecified linear functions, and it is not clear whether there is a relation
between the first and the second type of example.  Without loss of
generality, we use the \emph{originalized $d$th Dickson polynomial}
$T_{d}(x,z) = T_{d}^{*}(x,z) - T_{d}^{*}(0,z)$ which also satisfy \eqref{eq:28}.

\Citet{gat12a} presents a normal form for the decompositions in Ritt's
Theorem under Zannier's assumption $g'(g^{*})'\neq 0$ and the standard
assumption $\gcd(e,d)=1$, where $d=k+e\deg w$ in
\eqref{eq:76}. This normal form is unique unless $p\mid m$.

\begin{theorem} (Ritt's Second Theorem, Normal Form, tame case)
  \label{thm:Ritt2}
  Let $d > e \geq 2$ be
  coprime integers, and $n=de$ coprime to the characteristic of $F$.  Furthermore, let $f = g \circ h =
  g^{*} \circ h^{*}$ be monic original polynomials with $\deg g = \deg h^{*} = d$, $\deg h = \deg g^{*} = e$.

Then either \ref{th:fifi-1} or \ref{th:fifi-2} holds, and \ref{th:fifi-3} is also
  valid.
  \begin{ronumerate}
  \item\label{th:fifi-1} (Exponential Case) There exists a monic
    polynomial $w \in F[x]$ of degree $s$ and $a \in F$ so that
    \begin{equation}\label{eq:mopo}
      f= (x^{ke}w^{e}(x^{e}))^{[a]}
    \end{equation}
   where $d=se+k$ is the division with remainder of $d$ by $e$, with
   $1 \leq k < e$. Furthermore
    \begin{align}
      \label{eq:unidet-1} (g,h)& = (x^{k}w^{e},x^{e})^{[a]}, & (g^{*}, h ^{*}) & = (x^{e}, x^{k} w
      (x^{e}))^{[a]},
    \end{align}
    and $(w,a)$ is uniquely determined by $f$ and $d$.
    Conversely, any $(w,a)$ as above yields a $2$-collision via the above
    formulas.
  \item\label{th:fifi-2} (Trigonometric Case) There exist $z,a \in F$
    with $z \neq 0$ so that
    \begin{align}\label{eq:TN}
      f = T_{n}(x,z)^{[a]}.
    \end{align}
    Furthermore we have
    \begin{align}
      \label{eq:ab} (g,h) & =   (T_{d}(x,z^{e}) , T_{e}(x,z))^{[a]}, &
      (g^{*}, h^{*})& = (T_{e}(x,z^{d}),
      T_{d}(x,z))^{[a]},
    \end{align}
    and $(z,a)$ is uniquely determined by $f$.     Conversely, any $(z,a)$ as
    above yields a $2$-collision via the above formulas.

  \item\label{th:fifi-3} For $e=2$, the Trigonometric Case is included in
    the Exponential Case.  For $e \geq 3$, the Exponential and Trigonometric Cases
    are mutually exclusive.
  \end{ronumerate}
\end{theorem}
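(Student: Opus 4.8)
The plan is to derive \autoref{thm:Ritt2} from the classical Ritt Second Theorem \cite{zan93,sch00c} rather than reprove it, the new content being the reduction of its linear ambiguities to the single shift parameter $a$. First I would check that the hypotheses of the classical theorem are met. Since $n=de$ is coprime to the characteristic, so are $d=\deg g$ and $e=\deg g^{*}$; as $g$ and $g^{*}$ are monic this gives $g' \neq 0 \neq (g^{*})'$ (indeed all four components have degree coprime to $p$), so Zannier's condition holds, and $\gcd(d,e)=1$ is assumed. Then the classical theorem applies and, up to composition with linear polynomials on the outside of $f$ and on the middle of each of the two decompositions, puts the collision into one of the two shapes \eqref{eq:76} (with a monic $w$) or \eqref{eq:28} (with Dickson polynomials and some $z\neq 0$), with the parameters living in $F$ since all five polynomials do.

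Next I would collapse the linear freedom. The two decompositions share $f$, so the outer linear map is common to both, and by \eqref{eq:82} we may normalize $f,g,h,g^{*},h^{*}$ to be monic original. One checks that the polynomials occurring in the two standard shapes --- $x^{ke}w^{e}(x^{e})$, $x^{k}w^{e}$, $x^{e}$, $x^{k}w(x^{e})$, and, after originalizing the Dickson polynomials as above, $T_{n}(x,z)$, $T_{d}(x,z^{e})$, $T_{e}(x,z)$, $T_{d}(x,z)$ --- are themselves monic original, so that \eqref{eq:76}, \eqref{eq:28} and the component formulas \eqref{eq:unidet-1}, \eqref{eq:ab} hold on the nose. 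Imposing monic original on all five polynomials simultaneously then fixes every remaining middle linear map, and the only freedom surviving is precomposition of the inner component with $x+a$ together with the matching correction on the outside, that is, the original shift \eqref{eq:84}. This yields \eqref{eq:mopo} and \eqref{eq:unidet-1} in case \ref{th:fifi-1} and \eqref{eq:TN} and \eqref{eq:ab} in case \ref{th:fifi-2}; the converse, that any $(w,a)$ resp.\ $(z,a)$ produces a $2$-collision, is then immediate from \eqref{eq:76} resp.\ \eqref{eq:28} since original shifting respects decompositions.

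For uniqueness of the parameters I would use that a non-linear monic original polynomial has trivial stabilizer under original shifting, so it is enough to recover the unshifted representative from $f$ (together with $d$). The coefficient of $x^{n-1}$ in $f^{[b]}$ is affine in $b$ with leading coefficient $n \neq 0$ and vanishes in both standard shapes, which pins down $a$. In case \ref{th:fifi-1} one then has $f^{[-a]} \in F[x^{e}]$, and with $k = d \bmod e$ read off from $d$, dividing by $x^{ke}$, undoing the substitution $x^{e} \mapsto y$, and taking the unique monic $e$-th root recovers $w$; in case \ref{th:fifi-2} the coefficient of $x^{n-2}$ in $f^{[-a]}$ is a nonzero multiple of $z$, recovering $z$. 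For part \ref{th:fifi-3}: if $e \geq 3$ and $f$ had both shapes, translating one of the shifts would give $x^{ke}w^{e}(x^{e}) = T_{n}(x,z)^{[b]}$; the left side lies in $F[x^{e}]$, while $T_{n}(x+b,z) - T_{n}(b,z)$ has coefficient $nb$ at $x^{n-1}$ (Dickson polynomials have no subleading term), forcing $b = 0$, and then its coefficient $-nz \neq 0$ at $x^{n-2}$ together with $e \nmid n-2$ gives a contradiction. For $e = 2$, the identity $T_{2d}^{*}(x,z) = T_{d}^{*}(x,z)^{2} - 2z^{d}$ and $T_{d}^{*}(0,z) = 0$ (odd $d$) give $T_{n}(x,z) = T_{d}(x,z)^{2}$, and $T_{d}(x,z)$ being odd equals $x\,w(x^{2})$ with $w$ monic, so $T_{n}(x,z)^{[a]}$ has the form \eqref{eq:mopo} with $k = 1$.

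The main obstacle I expect is the normalization step: one must argue that requiring all five polynomials to be monic original genuinely collapses the four unspecified linear functions of the classical theorem to the single $a$, with no residual symmetry --- over $\bar F$ the scalings of the standard shapes conceal $n$-th roots of unity, and one has to see that fixing $f$ together with the monic condition removes them. A secondary technical point is to confirm that the originalized Dickson polynomials still obey the composition and component identities used in case \ref{th:fifi-2}.
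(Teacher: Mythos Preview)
The paper does not prove \autoref{thm:Ritt2}; it quotes the result from \cite{gat12a} as an imported tool, so there is no ``paper's own proof'' to compare against. Your proposal is a faithful reconstruction of the argument one would expect in that reference: start from the classical Ritt--Zannier--Schinzel theorem, observe that the tame hypothesis $p\nmid n$ guarantees $g'(g^{*})'\neq 0$, and then collapse the four linear ambiguities to the single original shift by normalizing all five polynomials to be monic original. Your uniqueness arguments (reading off $a$ from the coefficient of $x^{n-1}$, then $w$ or $z$ from the unshifted form) and your treatment of \ref{th:fifi-3} via the $x^{n-2}$ coefficient and the identity $T_{2d}(x,z)=T_{d}(x,z)^{2}$ for odd $d$ are correct and match what the paper later uses implicitly (compare the proof of \autoref{pro:6}, which exploits exactly these coefficient positions).

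The obstacle you flag is the real one: the step ``imposing monic original on all five polynomials fixes every remaining middle linear map'' is where the work lies, and your sketch does not actually carry it out. Concretely, after the classical theorem you have $g = L_{1}\circ \tilde g\circ L_{2}$ and $h = L_{2}^{-1}\circ \tilde h\circ L_{3}$ (and similarly for the starred pair with its own middle linear $L_{2}^{*}$), and you must show that monic-originality of $g,h,g^{*},h^{*}$ together with the specific shapes of $\tilde g,\tilde h$ forces $L_{2}$ and $L_{2}^{*}$ to be the identity and $L_{1},L_{3}$ to constitute an original shift. This is a finite check but not a triviality --- for instance, in the exponential case the shape $x^{e}$ has nontrivial scaling symmetries $x^{e}\circ(\zeta x)=\zeta^{e}x^{e}$, and one must see that these are absorbed into $w$ rather than surviving as extra parameters. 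If you want a self-contained proof rather than a citation, this normalization needs to be written out; otherwise your proposal is sound.
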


If $p \nmid n$, then the case where $\gcd (d, e) \neq 1$ is reduced
to the previous one by the following result about the left and right
greatest common divisors of decompositions.  It was shown over
algebraically closed fields by \citet[Proposition~1]{tor88a}; a more concise proof
using Galois theory is due to \citet[Lemma~2.8]{ziemue08}.  We use the
version of \citet[Fact~6.1(i)]{gat12a}, adapted to monic original polynomials.
\begin{proposition}
  \label{pro:tortrat}
  Let $d,e,d^{*},e^{*} \geq 2$ be integers and $de=d^{*}e^{*}$ coprime to $p$.
  Furthermore, let $g \circ h = g^{*} \circ h^{*}$ be monic
  original polynomials with $\deg g = d$, $\deg h = e$, $\deg
  g^{*} = d^{*}$, $\deg h^{*} = e^{*}$, and $\ell =
  \gcd(d,d^{*})$, $r = \gcd(e,e^{*})$.  Then there are unique
  monic original polynomials $a$ and $b$ of degree $\ell$ and $r$,
  respectively, such that
  \begin{align}
   g & = a \circ u, & h & = v \circ b, \\
   g^{*} & = a \circ u^{*}, & h^{*} & = v^{*} \circ b,
  \end{align}
  for unique monic original polynomials $u$, $u^{*}$, $v$, $v^{*}$ of degree
  $d/\ell$, $d^{*}/\ell$, $e/r$, and $e^{*}/r$, respectively.
\end{proposition}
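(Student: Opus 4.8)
The plan is to recast the statement as a question about the lattice of intermediate fields of the separable extension $F(x)/F(f)$ attached to $f=g\circ h=g^{*}\circ h^{*}$, following the Galois-theoretic argument of \citet[Lemma~2.8]{ziemue08}. Since $n=de$ is coprime to $p$, the minimal polynomial $f(Y)-f\in F(f)[Y]$ of $x$ over $F(f)$ is irreducible of degree $n=[F(x):F(f)]$ with nonzero $Y$-derivative $f'(Y)$, so $F(x)/F(f)$ is separable. Let $\widehat F$ be its Galois closure, $G=\operatorname{Gal}(\widehat F/F(f))$, and $H=\operatorname{Gal}(\widehat F/F(x))$; then $G$ acts faithfully and transitively on $\Omega=G/H$ with $\#\Omega=n$, and by the fundamental theorem the fields between $F(f)$ and $F(x)$ correspond bijectively to the subgroups of $G$ containing $H$, hence to the $G$-invariant partitions of $\Omega$, with inclusion of fields matching refinement of partitions.

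The crucial use of $f$ being a \emph{polynomial} is that the place $\infty$ of $F(f)$ is totally --- and, as $p\nmid n$, tamely --- ramified in $F(x)$. Hence the inertia group of a place of $\widehat F$ above $\infty$ is cyclic, generated by some $\sigma\in G$ acting on $\Omega$ as a single $n$-cycle. I would draw two consequences. First, every $G$-invariant partition of $\Omega$ is $\sigma$-invariant, and the $\sigma$-invariant partitions of an $n$-cycle into equal blocks are exactly the residue-class partitions, one for each divisor of $n$; therefore \emph{for each $t\mid n$ there is at most one intermediate field of degree $t$ over $F(f)$}. Second, in any intermediate field $K$ the place $\infty$ is again totally ramified with $F$-rational residue field, so $K$ is a rational function field (Lüroth) with a single place above $\infty$, totally ramified; moving that place to infinity gives a generator $t$ of $K$ whose only pole in $F(x)$ is at $\infty_{x}$, so $t\in F[x]$, and after normalization $K=F(\phi)$ for a unique monic original polynomial $\phi$ of degree $[F(x):K]$ (two monic original generators of a field differ by a linear polynomial, which monicity and originality force to be $x$). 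This second fact is exactly why the decomposition lattice of a polynomial consists of polynomials; alternatively one cites it.

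The two given decompositions now give intermediate fields $F(h)$, $F(h^{*})$ with $[F(h):F(f)]=d$, $[F(h^{*}):F(f)]=d^{*}$, corresponding (in the $\sigma$-labelling of $\Omega$) to the residue-class partitions modulo $d$ and modulo $d^{*}$. Put $M=F(h)\cap F(h^{*})$ and $L=F(h)\cdot F(h^{*})$; these correspond to the common coarsening and the common refinement of those two partitions, so using $\gcd(n/e,n/e^{*})=n/\operatorname{lcm}(e,e^{*})$ one gets $[M:F(f)]=\gcd(d,d^{*})=\ell$ and $[F(x):L]=\gcd(e,e^{*})=r$. By the second fact, $M=F(m)$ and $L=F(b)$ for unique monic original $m$, $b$ of degrees $n/\ell$ and $r$. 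From $M\subseteq F(h)$ and $M\subseteq F(h^{*})$ we have $m=u\circ h=u^{*}\circ h^{*}$ with unique monic original $u,u^{*}$ of degrees $d/\ell$, $d^{*}/\ell$; from $F(f)\subseteq M$ we have $f=a\circ m$ with unique monic original $a$ of degree $\ell$, so $a\circ u\circ h=a\circ m=f=g\circ h$, and cancelling $h$ gives $g=a\circ u$, and likewise $g^{*}=a\circ u^{*}$. From $F(h)\subseteq L$ and $F(h^{*})\subseteq L$ we have $h=v\circ b$, $h^{*}=v^{*}\circ b$ with unique monic original $v,v^{*}$ of degrees $e/r$, $e^{*}/r$. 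This yields all the claimed polynomials with the asserted degrees.

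For uniqueness: if $a'$ is monic original of degree $\ell$ with $g=a'\circ u_{1}$ and $g^{*}=a'\circ u_{2}$, then $F(u_{1}\circ h)$ is intermediate with $[F(u_{1}\circ h):F(f)]=\ell$, hence equals $M$ by the first fact, whence $u_{1}\circ h=m$, $u_{1}=u$, and $a'\circ m=f=a\circ m$ forces $a'=a$; symmetrically, any admissible $b'$ generates the unique intermediate field of degree $n/r$ over $F(f)$, so $b'=b$, and then $u,u^{*},v,v^{*}$ are pinned down by cancellation. I expect the main obstacle to be exactly the two polynomial-specific inputs of the second paragraph --- the $n$-cycle at infinity and the characterization of polynomially generated intermediate fields, including uniqueness of the monic original generator --- which are effortless to apply but need a little care to set up over an arbitrary field; everything else is bookkeeping in the partition lattice and cancellation of composition factors.
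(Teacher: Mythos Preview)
The paper does not supply its own proof of this proposition: it is quoted as a known fact, with references to Tortrat, to \citet[Lemma~2.8]{ziemue08}, and to \citet[Fact~6.1(i)]{gat12a}. Your proposal is precisely a careful write-up of the Galois-theoretic argument attributed to Zieve and M\"uller, and it is correct: the separability of $F(x)/F(f)$ from $p\nmid n$, the $n$-cycle at infinity coming from tame total ramification, the resulting bijection between intermediate fields and divisors of $n$, and the identification of $F(h)\cap F(h^{*})$ and $F(h)\cdot F(h^{*})$ with the fields of index $\ell=\gcd(d,d^{*})$ and degree $r=\gcd(e,e^{*})$ all go through as you describe. The one place to be slightly more explicit is the claim that each intermediate field has a polynomial generator defined over $F$ (not merely over an algebraic closure): you do address this via the $F$-rationality of the unique place above $\infty$, and that suffices. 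So there is nothing to compare against in the paper itself; your argument matches one of the cited proofs and is sound.
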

This determines $\pD{n,\{d,e\}}$ exactly if $p
\nmid n = de$.

For coprime integers $d \geq 2$ and $e \geq 1$, we define the sets
\begin{align}
  \pE{d,e} & = \begin{cases*}
\pP{d} & for $e = 1$, \\
\{x^{k}w^{e} \in \pP{d} \colon d = s \cdot e + k \text{ with } 1 \leq k
< e & \label{eq:25} \\
\quad \text{ and } w \in \Fq[x] \text{ monic of degree } s\},  & otherwise,
\end{cases*} \\
  \pT{d,e} & = \{ T_{d}(x,z^{e}) \in \pP{d} \colon z \in
  \Fq^{\times}\}
\end{align}
of \emph{exponential} and \emph{trigonometric components},
respectively.  For $d < e$, we have $s = 0$, $k = d$ in \eqref{eq:25}, and therefore
\begin{equation}
  \label{eq:15}
  \pE{d,e} = \{ x^{d} \}. \tag{2.10a}
\end{equation}

This allows the following reformulation of \autoref{thm:Ritt2}.
\begin{corollary}
  \label{cor:Ritt2}
  Let $f \in \pD{n, \{d, e\}}$.  Then either (i) or (ii) holds and
  (iii) is also valid.
  \begin{ronumerate}
    \item There is a unique monic original $g \in \pE{d, e}$ and a
      unique $a \in F$ such that
     \begin{equation}
        \label{eq:7}
        f = (g \circ x^{e})^{[a]}.
      \end{equation}
    \item There is a unique monic original $g \in \pT{de, 1}$ and a
      unique $a \in F$ such that
      \begin{equation}
        \label{eq:27}
        f = g^{[a]}.
      \end{equation}
    \item If $e = 2$, then case (ii) is included in case (i).  If $e
      \geq 3$, they are mutually exclusive.
  \end{ronumerate}
   Conversely, we have
  \begin{equation}
    \label{eq:33}
    \pD{n, \{d, e\}} = (\pE{d,e} \circ \pE{e, d})^{[F]} \cup \pT{de,
    1}^{[F]},
  \end{equation}
where the union is disjoint if and only if $e \geq 3$, and
\begin{equation}
  \# \pD{n, \{d, e\}} = q \cdot (q^{\floor{d/e}} + (1 - \delta_{e,2})(q-1)).
\end{equation}
\end{corollary}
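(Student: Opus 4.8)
The plan is to read \autoref{cor:Ritt2} as a translation of \autoref{thm:Ritt2} into the language of the component sets $\pE{\cdot,\cdot}$ and $\pT{\cdot,\cdot}$, and then to enumerate. The point of entry is that, by \eqref{eq:22}, a polynomial lies in $\pD{n,\{d,e\}} = \pD{n,d}\cap\pD{n,e}$ exactly when it is monic original of degree $n$ and admits both a decomposition $(g,h)\in\pP{d}\times\pP{e}$ and a decomposition $(g^{*},h^{*})\in\pP{e}\times\pP{d}$; since $d\neq e$ these have the degree pattern of \autoref{thm:Ritt2}, which therefore applies, while conversely the formulas \eqref{eq:unidet-1} and \eqref{eq:ab} show that every $f$ produced by the Exponential or the Trigonometric Case already lies in $\pD{n,\{d,e\}}$.

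Next I would build the dictionary between the two formulations. In the Exponential Case the condition $d=se+k$ with $1\leq k<e$ forces $s=\floor{d/e}$, and the identity $x^{ke}w^{e}(x^{e}) = (x^{k}w^{e})\circ x^{e}$ exhibits $g:=x^{k}w^{e}$ as a monic original polynomial of degree $k+es=d$, hence $g\in\pE{d,e}$ by \eqref{eq:25}; conversely every element of $\pE{d,e}$ has this shape. Because $e<d$ gives $\pE{e,d}=\{x^{e}\}$ by \eqref{eq:15}, this identifies the family of all $(x^{ke}w^{e}(x^{e}))^{[a]}$ with $(\pE{d,e}\circ\pE{e,d})^{[F]}$, and under the bijection $w\leftrightarrow g$ the uniqueness of $(w,a)$ in \autoref{thm:Ritt2} turns into the uniqueness of $(g,a)$ asserted in~(i). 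For the Trigonometric Case, $\pT{de,1}=\{T_{de}(x,z)\colon z\in\Fq^{\times}\}$ is literally the family in \eqref{eq:TN}, and since $p\nmid n$ the coefficient of $x^{n-2}$ in $T_{n}(x,z)$ equals $-nz$ and hence recovers $z$, so $z\mapsto T_{n}(x,z)$ is injective; with \autoref{thm:Ritt2} this yields the uniqueness in~(ii). With these identifications in hand, parts (i)--(iii) of the corollary are the three clauses of \autoref{thm:Ritt2}, and the set identity \eqref{eq:33} together with its disjointness claim comes from combining the two cases (both inclusions via the converse directions of \autoref{thm:Ritt2}) with the mutual-exclusivity clause: for $e\geq 3$ the two cases cannot both occur, so the two sets on the right of \eqref{eq:33} are disjoint, whereas for $e=2$ the trigonometric set sits inside the exponential one and is nonempty, so the union fails to be disjoint.

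For the count, abbreviate $A=(\pE{d,e}\circ\pE{e,d})^{[F]}$ and $B=\pT{de,1}^{[F]}$ over $\Fq$. Every polynomial in play has degree $n\geq 6$, so by the stabilizer computation following \eqref{eq:84} each orbit under original shifting has exactly $q$ elements; concretely, the uniqueness statements in (i) and (ii) say that $(g,a)\mapsto(g\circ x^{e})^{[a]}$ on $\pE{d,e}\times\Fq$ and $(g,a)\mapsto g^{[a]}$ on $\pT{de,1}\times\Fq$ are injective, so $\#A=q\cdot\#\pE{d,e}$ and $\#B=q\cdot\#\pT{de,1}$. Here $\#\pE{d,e}=q^{\floor{d/e}}$, because $w\mapsto x^{k}w^{e}$ is injective on the $q^{\floor{d/e}}$ monic polynomials of degree $s=\floor{d/e}$, and $\#\pT{de,1}=q-1$ by the injectivity of $z\mapsto T_{n}(x,z)$ noted above. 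For $e\geq 3$ the union $A\cup B$ is disjoint, giving $\#\pD{n,\{d,e\}}=q\,q^{\floor{d/e}}+q(q-1)$; for $e=2$ we have $B\subseteq A$, giving $\#\pD{n,\{d,e\}}=\#A=q\,q^{\floor{d/e}}$. Both values are summarized by $q(q^{\floor{d/e}}+(1-\delta_{e,2})(q-1))$.

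I expect the only delicate point to be the counting, where one must check that the two parametrizations of $A$ and $B$ are genuinely bijective — injectivity comes from the uniqueness in \autoref{thm:Ritt2}, and surjectivity onto $A$ and $B$ is by construction — that the collapse $B\subseteq A$ at $e=2$ is accounted for without double counting, and that the enumeration of monic polynomials of degree $s$ and of admissible Dickson parameters $z\in\Fq^{\times}$ is exact. Everything else is bookkeeping with \eqref{eq:25}, \eqref{eq:15}, and the original-shift action \eqref{eq:84}.
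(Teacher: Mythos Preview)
Your proposal is correct and takes essentially the same approach as the paper: the paper does not give an explicit proof of this corollary, presenting it instead as a direct reformulation of \autoref{thm:Ritt2} in terms of the component sets $\pE{\cdot,\cdot}$ and $\pT{\cdot,\cdot}$, and your argument carries out exactly this translation together with the enumeration implied by the uniqueness clauses.
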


With respect to the size under original shifting, we have the following consequences.

\addtocounter{equation}{2}

\begin{proposition}
  \label{pro:6}
  For $F = \Fq$, coprime $d \geq 2$ and $e \geq 1$, both coprime to
  $p$, we have
  \begin{align}
    \# \pT{d,e}^{[\Fq]} & = \begin{cases*}
      q & for $d = 2$, \\
      q (q-1)/\gcd(q-1, e) & otherwise,
    \end{cases*} \\
    \# \pE{d,e}^{[\Fq]} & = \begin{cases*}
      q^{d-1} & for $e = 1$, \\
      q^{\floor{d/2}+1} - q(q-1)/2 & for $e=2$, \\
      q^{\floor{d/e}+1} & otherwise.
    \end{cases*}
  \end{align}
\end{proposition}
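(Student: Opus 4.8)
The plan is to count each of the four sets via the orbits of the original-shift action of $\Fq$ on $\pP{d}(\Fq)$. Since a monic original polynomial of degree $d \geq 2$ has trivial stabilizer (recalled above), each such orbit has exactly $q$ elements, so for a subset $S \subseteq \pP{d}(\Fq)$ the shift-closure $S^{[\Fq]}$ satisfies $\# S^{[\Fq]} = q \cdot N$, where $N$ is the number of orbits meeting $S$; the task thus reduces to computing $\# S$ and measuring how often $S$ can meet a single orbit, for $S = \pT{d,e}$ and $S = \pE{d,e}$. Three cases are immediate: $\pE{d,1} = \pP{d}$ is shift-closed, so its size is $q^{d-1}$ by \eqref{eq:42}; if $d < e$ then $\pE{d,e} = \{x^d\}$ by \eqref{eq:15}; and if $d = 2$ then $T_2(x,w) = x^2$, so $\pT{2,e} = \{x^2\}$ --- in the latter two cases $S$ is a single orbit. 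Henceforth $d \geq 3$, and for $\pE{d,e}$ also $d > e$; write $s = \floor{d/e} \geq 1$ and $k = d - se \in \{1, \dots, e-1\}$ (using $\gcd(d,e) = 1$).

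For $\pT{d,e}$ with $d \geq 3$ I would read off coefficients: the coefficient of $x^{d-2}$ in $T_d(x,w)$ is $-dw$, so (as $p\nmid d$) the map $w \mapsto T_d(x,w)$ is injective and $\#\pT{d,e} = \#\{z^e \colon z \in \Fq^\times\} = (q-1)/\gcd(q-1,e)$, the order of the group of $e$th powers in $\Fq^\times$. Moreover $T_d(x,w)$ has no $x^{d-1}$-term, while $[x^{d-1}]\,f^{[a]} = [x^{d-1}]\,f + da$ for any monic $f$ of degree $d$; hence $T_d(x,w)^{[a]} = T_d(x,w')$ forces $a = 0$ and then $w = w'$, so the elements of $\pT{d,e}$ lie in pairwise distinct orbits and $\#\pT{d,e}^{[\Fq]} = q(q-1)/\gcd(q-1,e)$. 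For $\pE{d,e}$ with $e \geq 3$ the map $w \mapsto x^k w^e$ from monic $w$ of degree $s$ is injective (unique $e$th roots in $\Fq[x]$), so $\#\pE{d,e} = q^s$, and the point will be to show $\pE{d,e}$ meets each orbit only once, giving $q^{s+1} = q^{\floor{d/e}+1}$. Given $(x^k w^e)^{[a]} = x^k W^e$ with $f := x^k w^e$, comparing the multiplicity of the root $0$ (a nonzero root $r$ of $f$ has multiplicity $e\,\mult_r(w)$) yields $\mult_a(f(x)-f(a)) = \mult_0(x^kW^e) \equiv k \not\equiv 0 \pmod e$, excluding $a\neq0$ with $w(a)=0$; and if $a\neq0$, $w(a)\neq0$, then $f(a)\neq0$ and, since $x^kW^e$ has all preimages of $0$ of index divisible by $e$ except $x=0$, the value $f(a)$ has a unique preimage of $f$ of index $\not\equiv 0 \pmod e$ and the rest of index $\geq e$, hence roots of $f' = x^{k-1}w^{e-1}(kw+exw')$ of multiplicity $\geq e-1$; these are neither $0$ nor roots of $w$, so they are roots of $kw+exw'$ of multiplicity $\geq e-1\geq2$, and a degree count against $\deg(kw+exw')\leq s$ is contradictory. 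Thus $a = 0$, $w = W$, and $\#\pE{d,e}^{[\Fq]} = q^{\floor{d/e}+1}$.

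The case I expect to require the most care is $\pE{d,2}$ (so $d$ odd, $k=1$, $s=(d-1)/2=\floor{d/2}$, still $\#\pE{d,2}=q^s$), because now $\pE{d,2}$ genuinely meets some orbits twice and I must identify them. For $z\in\Fq^\times$, $2z^d$ is a critical value of $T_d(\cdot,z^2)$ all but one of whose preimages are double, giving a factorization $T_d(x,z^2) - 2z^d = (x-2z)\,v(x)^2$ with $v$ monic of degree $s$; with its image under $x\mapsto-x$ this shows $T_d(x,z^2)^{[2z]} = x\,v(x+2z)^2$ and $T_d(x,z^2)^{[-2z]}$ both lie in $\pE{d,2}$, and a multiplicity argument as above shows these are the \emph{only} shifts of $T_d(x,z^2)$ inside $\pE{d,2}$. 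Conversely, if some orbit meets $\pE{d,2}$ in two points $xw^2$ and $(xw^2)^{[a]} = xW^2$ with $a\neq0$, then $0$ and $f(a)$ (distinct, since $w(a)\neq0$ by the parity argument) are critical values of $f = xw^2$ each of ramification $\geq s$; since the total finite ramification of a degree-$d$ map is $\leq d-1 = 2s$, these are the \emph{only} critical values, and a degree-$d$ polynomial with exactly two critical values is, up to composition with linear polynomials, a Dickson polynomial (a classical result underlying \autoref{thm:Ritt2}) --- so, being monic original with a maximally ramified critical value at $0$, $f$ lies in $\pT{d,2}^{[\Fq]}$. The $x^{d-1}$-coefficient argument then shows the $(q-1)/2$ values of $z^2$ give distinct orbits, each met by $\pE{d,2}$ exactly twice, all other orbits meeting $\pE{d,2}$ being met once; hence $N = q^s - (q-1)/2$ and $\#\pE{d,2}^{[\Fq]} = q^{\floor{d/2}+1} - q(q-1)/2$.

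The main obstacle, then, is this $e=2$ computation --- specifically the identification of the double incidences with the trigonometric orbits, which rests on the classification of polynomials with exactly two critical values. The only other technical wrinkle is that the multiplicity and ramification bookkeeping must be run characteristic-free (Hasse derivatives when $p$ is small); the relevant inequalities still close because $e-1\geq2$ in the exponential case and because the ramification over the second critical value is maximal when $e=2$.
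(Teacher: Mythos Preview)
Your approach is sound. For $\pT{d,e}$ it coincides with the paper's (coefficient of $x^{d-2}$ for injectivity, coefficient of $x^{d-1}$ to separate orbits). For $\pE{d,e}$ with $e\geq 3$ you run a ramification count against $\deg(kw+exw')=s$, whereas the paper compares $\mult_0$ of the two derivatives and then analyses $\Delta=w(x+a)-\bar w$; both routes reach $a=0$. The genuine divergence is at $e=2$. The paper sets out to prove the implication $f^{[a]}\in\pE{d,2}\Rightarrow a=0$, but that implication is \emph{false}---your Dickson shifts $T_d(x,z^2)^{[\pm 2z]}\in\pE{d,2}$ are explicit witnesses---and indeed the paper's case split leaves the branch $e=2$, $\Delta\neq 0$ unhandled, so the correction $-q(q-1)/2$ is never actually derived there. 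Your route via the two-critical-value classification is exactly what is needed: an orbit meeting $\pE{d,2}$ twice forces exactly two critical values, hence (by the classical input to \autoref{thm:Ritt2}) a Dickson form $T_d(x,c)$; since $0$ is then a rational critical value and $d$ is odd, the parameter $c$ must be a square in $\Fq$, and the resulting $(q-1)/2$ orbits, each met twice, account precisely for the correction. One point to tighten in your $e\geq3$ sketch: the degree count should sum the contributions to $kw+exw'$ from \emph{all} preimages of $f(a)$ (including $a$ itself when $\mult_0(W)>0$), giving a total $\geq d-1-t'$ with $t'\leq s$, hence $(e-1)s+k-1\leq t'\leq s$, which is the contradiction for $e\geq 3$.
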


\begin{proof}
  \begin{ronumerate}
    \item For $d=2$, we have $\pT{2,e}=\{x^{2}\}$ independent from $e$.  Since
  $p \neq 2$, the original shifts $(x^{2})^{[a]} = x^{2} + 2ax$ range
  over all monic original polynomials of degree 2 as $a$ runs over all
  field elements.  Thus $\pT{2,e}^{[F]} = \pP{2}$ and the size follows
  from \eqref{eq:42}.

  For $d > 2$, the coefficient of $x^{d-2}$ in $T_{d}(x, z^{e}) \in
  \pT{d,e}$ is $-dz^{e}$. Since there are exactly
  $(q-1)/\gcd(q-1, e)$ distinct $e$th powers $z^{e}$ for nonzero
  elements $z \in \Fq$, this shows
  \begin{equation}
    \label{eq:101}
    \# \pT{d,e} = (q-1)/\gcd(q-1, e).
  \end{equation}
  For the claimed formula it is sufficient to show that for $T \in
  \pT{d,e}$ and $a \in F$, we have
  \begin{equation}
    \label{eq:102}
    T^{[a]} \in \pT{d,e} \text{ if and only if } a=0.
  \end{equation}
  For $d$ odd, we also have $T$ an odd polynomial.  Hence the
  coefficient of $x^{d-1}$ in $T$ is $0$ and the coefficient of
  $x^{d-1}$ in $T^{[a]}$ is $ad$. This proves the claim. For $d$ even,
  the same argument applies with ``odd'' replaced by ``even''.
\end{ronumerate}

For a nonzero polynomial $f \in F[x]$ and $b$ in some
algebraic closure $K$ of $F$, let $\mult_{b}(f)$ denote the \emph{root
multiplicity} of $b$ in $f$, so that $f = (x-b)^{\mult_{b}(f)}u$
with $u \in K[x]$ and $u(b) \neq 0$.

\begin{ronumerate}[resume]
  \item For $e = 1$, we have $\pE{d,e} = \pP{d} = \pP{d}^{[F]}$ of size
    $q^{d-1}$ by \eqref{eq:42}.

    For $e > 1$, we have $\# \pE{d,e} = q^{\floor{d/e}}$. It is sufficient
    to show that for $f \in \pE{d,e}$ and $a \in F$, we have
    \begin{equation}
      \label{eq:67}
      f^{[a]} \in \pE{d,e} \text{ if and only if } a = 0.
    \end{equation}
    We have directly $f^{[0]} = f \in \pE{d,e}$. Conversely, let
    $f^{[a]}= \bar{f} = x^{k} \bar{w}^{e} \in \pE{d,e}$, and compare the
    derivatives
    \begin{align}
      {f^{[a]}}' & = (x+a)^{k-1} w(x+a)^{e-1} (kw(x+a) +
      e(x+a)w'(x+a)), \label{eq:87} \\
      {\bar{f}}' & = x^{k-1}\bar{w}^{e-1} (k\bar{w} + ex{\bar{w}}'), \label{eq:88}
    \end{align}
    respectively. These are nonzero, since $p \nmid d = \deg(\bar{f}) =
    \deg(f^{[a]})$, and we compute the root multiplicity of $0$ as
    \begin{align}
      \mult_{0} ({f^{[a]}}') & = [a=0]\cdot(k-1) + \mult_{a}(w) \cdot (e-1) + \begin{cases*}
        \mult_{a}(w) & if $p \mid \mult_{a}(w)$, \\
        \mult_{a}(w) - [a \neq 0] & otherwise.
      \end{cases*} \\
                           & = e \mult_{a}(w) + [a=0] (k-1) - [p \nmid
                           \mult_{a}(w) \text{ and } a \neq 0], \\
      \mult_{0}({\bar{f}}') & = k-1 + (e-1) \cdot \mult_{0}(\bar{w}) +
      \mult_{0}(\bar{w}) \\
                           & = e \mult_{0}(\bar{w}) + k -1.
    \end{align}
    If $f^{[a]} = \bar{f}$, we find modulo $e$
    \begin{equation}
      \label{eq:70}
      k -1 = [a=0]\cdot(k-1) - [p \nmid \mult_{a}(w) \text{ and } a \neq 0].
    \end{equation}
    This holds if
    \begin{itemize}
      \item $a=0$ or
      \item $p \mid \mult_{a}(w)$ and $k=1$.
    \end{itemize}
    It remains to show that the latter case is included in the
    former. In other words, that $p \mid \mult_{a}(w)$ and $k = 1$
    imply $a = 0$. Let $\Delta = w(x+a) - \bar{w}$ of degree less than
    $s$, since both are monic. Then
    \begin{align}
      0 = {f^{[a]}}' - {\bar{f}}' & = w(x+a)^{e} - {\bar{w}}^{e} +
      e((x+a)w'(x+a) - x{\bar{w}}') \\
      & = e \Delta {\bar{w}}^{e-1} +
      \binom{e}{2}\Delta^{2}{\bar{w}}^{e-2} + \dots \\
      & \quad + (e-1)x\bar{w}' + ex\Delta' + ea\bar{w}' + ea\Delta'
    \end{align}
    If $\Delta = 0$, we are done. Otherwise, $\deg \Delta \geq 0$ and
    the coefficient of $x^{\deg \Delta + s(e-1)}$ is
    \begin{equation}
      \label{eq:103}
      e \operatorname{lc}(\Delta) + s(e-1) [e=2 \text{ and } \deg
      \Delta = 0].
    \end{equation}
    If $e > 2$, this is nonzero, a contradiction.

    We have $w(x+a) = \bar{w}$.  Feeding this and $k=1$ back into the definition of
    $f^{[a]}$ and $\bar{f}$, we obtain for their difference
    \begin{equation}
      \label{eq:86}
      f^{[a]} - \bar{f} = (x+a){\bar{w}}^{e} - f(a) - x{\bar{w}}^{e} =
      a{\bar{w}}^{e} - f(a).
    \end{equation}
    With $\deg(w) = s > 0$ this implies $a=0$.

  \end{ronumerate}
\end{proof}

\section{Normal Form for Collisions}
\label{sec:structure}

The results of the previous section suffice to describe $2$-collisions
of decompositions $g \circ h = g^{*} \circ h^{*}$ with length $2$
each.  This section describes the structure of ``many''-collisions of
decompositions with arbitrary, possibly pairwise distinct, lengths.
Let $\vec{d} = (d_{1}, d_{2}, \dots, d_{\ell})$ be an ordered
factorization of $n = d_{1} \cdot d_{2} \cdot \dots \cdot d_{\ell}$ with
$\ell$ nontrivial divisors $d_{i} \in N$, $1 \leq i \leq \ell$, and
define the set
\begin{equation}
  \label{eq:32}
  \pD{n, \vec{d}} = \{f \in \pP{n} \colon f = g_{1} \circ
  \dots \circ g_{\ell} \text{ with } \deg g_{i} = d_{i} \text{ for all }
  1 \leq i \leq \ell\}
\end{equation}
of decomposable polynomials with decompositions of \emph{length}
$\ell$ and \emph{degree sequence} $\vec{d}$.  For a set $\vec{D} = \{\vec{d}^{(1)}, \vec{d}^{(2)}, \dots, \vec{d}^{(c)} \}$ of $c$
 ordered factorizations of $n$, we define
\begin{align}
  \label{eq:34}
  \pD{n, \vec{D}} & = \bigcap_{\vec{d} \in \vec{D}} \pD{n, \vec{d}} \\
  & =  \{  f \in \pP{n} \colon f = g^{(k)}_{1} \circ
  \dots \circ g^{(k)}_{\ell_{k}} \text{ with } \deg g_{i}^{(k)} = d_{i}^{(k)} \\
& \quad \quad \text{ for
  all } 1 \leq k \leq c \text{ and } 1 \leq i \leq \ell_{k}\}.
\end{align}
For $\# \vec{D} = 1$, we have
  \begin{align}
\label{eq:5}
\pD{n, \vec{D}} = \pD{n, \vec{d}} & = \pP{d_{1}} \circ \pP{d_{2}} \circ \dots \circ
\pP{d_{\ell}}, \\
\# \pD{n, \vec{D}} = \# \pD{n, \vec{d}} & = q^{\sum_{1 \leq i \leq \ell}
  d_{i} - \ell},
  \end{align}
where $\vec{D} = \{ \vec{d}\}$ and $\vec{d} = (d_{1}, d_{2}, \dots,
d_{\ell})$.  The rest of this section deals with $\# \vec{D} > 1$.

We determine the structure of $\pD{n, \vec{D}}$.  First, we replace
$\vec{D}$ by a \emph{refinement} $\vec{D}^{*}$, where all
elements are suitable permutations of the \emph{same} ordered factorization of $n$.  Second,
we define the \emph{relation graph} of $\vec{D}^{*}$ that captures the
degree sequences for polynomials in
$\pD{n, \vec{D}}$.  Finally, we classify the elements of
$\pD{n, \vec{D}}$ as a composition of unique trigonometric or unique
exponential components as defined in \eqref{eq:25}.

\subsection{A refinement of \large $\vec{D}$}
\label{sec:transf-into-copr}

Let $\vec{d} = (d_{1}, d_{2})$ and $\vec{e} = (e_{1}, e_{2})$ be
distinct ordered factorizations of $n$.  Let $\ell = \gcd(d_{1},
e_{1})$, $d_{1}^{*} = d_{1}/\ell$, $e_{1}^{*} = e_{1}/\ell$, $r =
\gcd(d_{2}, e_{2})$, $d_{2}^{*} = d_{2}/r$, and $e_{2}^{*} = e_{2}/r$.  Then
\autoref{pro:tortrat} shows
\begin{equation}
  \label{eq:37}
  \pD{n, \{\vec{d}, \vec{e}\}} = \pD{n, \{\vec{d}^{*}, \vec{e}^{*}\}}
\end{equation}
for $\vec{d}^{*} = (\ell, d_{1}^{*}, d_{2}^{*}, r)$ and $\vec{e}^{*}
= (\ell, e_{1}^{*}, e_{2}^{*}, r)$ with $\gcd(d_{1}^{*},
e_{1}^{*}) = 1 = \gcd(d_{2}^{*}, e_{2}^{*})$ and therefore $d_{1}^{*} = e_{2}^{*}$
and $d_{2}^{*} = e_{1}^{*}$.  We generalize this procedure to
two ordered factorizations of arbitrary length.  For squarefree $n$ this is
similar to the computation of a coprime (also: $\gcd$-free) basis for $\{d_{1}, d_{2}, e_{1}, e_{2}\}$,
if we keep duplicates and the order of factors; see
\citet[Section~4.8]{bacsha97}.  For squareful $n$, the factors with
$\gcd > 1$ require additional attention.

Let $\vec{d} = (d_{1}, d_{2}, \dots, d_{\ell})$ be an ordered factorization
of $n$ and call the underlying unordered multiset $\uset{\vec{d}} =
\{d_{1}, d_{2}, \dots, d_{\ell}\}$ of divisors its \emph{basis}.  A
\emph{refinement} of $\vec{d}$ is an ordered factorization
$\vec{d^{*}} = (d^{*}_{11}, \dots, d^{*}_{1m_{1}}, d^{*}_{21},
\dots, d^{*}_{2m_{2}}, \dots, d^{*}_{\ell 1}, \dots,
d^{*}_{\ell m_{\ell}})$, where $d_{i} = \prod_{1 \leq k \leq m_{i}}
d^{*}_{ik}$ for all $1 \leq i \leq \ell$.  We write $\vec{d}^{*} \mid
\vec{d}$ and have directly
\begin{equation}
  \label{eq:24}
  \pD{n, \vec{d}^{*}} \subseteq \pD{n, \vec{d}}.
\end{equation}
Every ordered factorization is a refinement of $(n)$.  A
\emph{complete} refinement of $\vec{d} = (d_{i})_{1 \leq i \leq \ell}$ is obtained by replacing every
$d_{i}$ by one of its ordered factorization into primes.

Two ordered factorizations $\vec{d} = (d_{1}, \dots, d_{\ell})$ and $\vec{e} =
(e_{1}, \dots, e_{\ell})$ of $n$ with the same basis, define a permutation
$\sigma = \sigma (\vec{d}, \vec{e})$ on the indices $1,2,\dots,\ell$
through
\begin{equation}
  \label{eq:38}
d_{i} = e_{\sigma(i)}
\end{equation}
for $1 \leq i \leq \ell$.  We require
\begin{equation}
  \label{eq:83}
  \sigma(i) < \sigma(j) \text{ for all } i < j \text{ with } d_{i} = d_{j}
\end{equation}
to make $\sigma$ unique.
In other words, $\sigma$ has to preserve the order of repeated
divisors.  If even stronger,
\begin{equation}
  \label{eq:13}
  \sigma(i) < \sigma(j) \text{ for all } i < j \text{ with }
  \gcd(d_{i},d_{j}) > 1,
\end{equation}
then we call $\vec{d}$ and $\vec{e}$ \emph{associated}.  Any two complete
refinements $\vec{d}^{*}$ of $\vec{d}$ and $\vec{e}^{*}$ of $\vec{e}$,
respectively, are associated and we have from \eqref{eq:24}
\begin{equation}
  \label{eq:14}
\pD{n, \{\vec{d}^{*},\vec{e}^{*}\}} \subseteq \pD{n, \{\vec{d},\vec{e}\}}.
\end{equation}
We ask for associated refinements $\vec{d}^{*}$ and $\vec{e}^{*}$, respectively,
that describe the same set of collisions as $\vec{d}$ and $\vec{e}$.  \autoref{algo:refine}
solves this task and returns ``coarsest'' associated refinements $\vec{d}^{*}$
and $\vec{e}^{*}$ that yield equality in \eqref{eq:14}.  We call the
output $\vec{d}^{*}$ of \autoref{algo:refine}
the \emph{refinement of $\vec{d}$ by $\vec{e}$} and denote it by
$\vec{d}^{*} = \vec{d} \refine \vec{e}$.  Similarly, $\vec{e}^{*} =
\vec{e} \refine \vec{d}$ is the refinement of $\vec{e}$ by $\vec{d}$
and this is well-defined, since interchanging the order of the input
merely interchanges the order of the output.

\begin{algorithm2e}
\caption{Refine $\vec{d}$ and $\vec{e}$}
\label{algo:refine}

\KwIn{two ordered factorizations $\vec{d} = (d_{1}, \dots, d_{\ell})$ and
  $\vec{e} = (e_{1}, \dots, e_{m})$ of $n$}
\KwOut{two associated refinements $\vec{d}^{*} \mid \vec{d}$ and
  $\vec{e}^{*} \mid \vec{e}$}

$\vec{d}^{*} \gets \begin{pmatrix}
1 & \dots & 1 & d_{1} \\
1 & \dots & 1 & d_{2} \\
  & \vdots & & \\
1 & \dots & 1 & d_{\ell} \\
\end{pmatrix} = (d^{*}_{i,j})_{\substack{1 \leq i \leq \ell \\ 1 \leq j
    \leq m+1}}$\;

$\vec{e}^{*} \gets \begin{pmatrix}
1 & \dots & 1 & e_{1} \\
1 & \dots & 1 & e_{2} \\
  & \vdots & & \\
1 & \dots & 1 & e_{m} \\
\end{pmatrix} = (e^{*}_{j,i})_{\substack{1 \leq j \leq m \\ 1 \leq m \leq \ell+1}}$\;

\For{$i = 1, \dots, \ell$}{
\For{$j = 1, \dots, m$ \label{step:inn_in}}{
$c \gets \gcd(d^{*}_{i,m+1}, e^{*}_{j,\ell+1})$ \label{step:gcd}\;
$d_{i,j}^{*} \gets c$ and $e_{j,i}^{*} \gets c$ \label{step:split_left}\;
$d^{*}_{i,m+1} \gets d^{*}_{i,m+1}/c$ and $e^{*}_{j,\ell+1} \gets
e^{*}_{j,\ell+1}/c$ \label{step:split_right}\;
}\label{step:inn_out}
}
remove last column (of all $1$'s) from $\vec{d}^{*}$ and $\vec{e}^{*}$\;
$\vec{d}^{*} \gets (d_{i,j}^{*})_{\substack{1 \leq k \leq \ell m \\ k
    = (i-1)m + j \\ 1 \leq i \leq \ell, 1 \leq j \leq m}}$
\tcc{rewrite row-by-row as a sequence}
$\vec{e}^{*} \gets (e_{j,i}^{*})_{\substack{1 \leq k \leq m \ell \\ k
    = (j-1)\ell + i \\ 1 \leq j \leq m, 1 \leq i \leq \ell}}$
\tcc{rewrite row-by-row as a sequence}
remove all $1$'s from $\vec{d}^{*}$ and $\vec{e}^{*}$\;
\KwRet{$\vec{d}^{*}$, $\vec{e}^{*}$}
\end{algorithm2e}

\begin{lemma}
  \label{lem:9}
  For two ordered factorizations $\vec{d}$ and $\vec{e}$ of $n$, the following are equivalent.
  \begin{ronumerate}
    \item\label{it:9:1} $\len(\vec{d} \refine \vec{e}) = \len(\vec{d})$ and $\len(\vec{e} \refine \vec{d}) = \len(\vec{e})$.
    \item\label{it:9:2} $\vec{d} \refine \vec{e} = \vec{d}$ and $\vec{e} \refine \vec{d} = \vec{e}$.
    \item\label{it:9:3} $\vec{d}$ and $\vec{e}$ are associated.
  \end{ronumerate}
\end{lemma}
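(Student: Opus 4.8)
My plan is to reduce the whole statement to a close reading of \autoref{algo:refine}. For its inputs $\vec{d}=(d_1,\dots,d_\ell)$ and $\vec{e}=(e_1,\dots,e_m)$, write $C=(c_{ij})_{1\le i\le\ell,\,1\le j\le m}$ for the matrix whose entry $c_{ij}$ is the value $c$ computed in step~\ref{step:gcd} when the double loop visits the cell $(i,j)$; since $d^{*}_{i,j}$ and $e^{*}_{j,i}$ are both set to this $c_{ij}$ in step~\ref{step:split_left}, the output $\vec{d}\refine\vec{e}$ is exactly the sequence obtained by reading $C$ row by row and deleting the entries equal to $1$, while $\vec{e}\refine\vec{d}$ is obtained by reading $C$ column by column and deleting the $1$'s. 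Because $\gcd$ and the divisions in step~\ref{step:split_right} act independently on each prime of the exponent vectors, the loop splits into one process per prime $q\mid n$, and (writing $v_q$ for the $q$-adic valuation) that process is precisely the northwest-corner rule for a transportation array with row sums $(v_q(d_i))_i$ and column sums $(v_q(e_j))_j$, both totalling $v_q(n)$. A short induction --- equivalently, the standard analysis of that rule --- shows that the last column of $\vec{d}^{*}$ and of $\vec{e}^{*}$ consists of $1$'s at termination; hence the $i$-th row of $C$ has product $d_i$ and the $j$-th column has product $e_j$. I would record this as a preliminary observation, since everything below rests on it.

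With it, \ref{it:9:2}$\Rightarrow$\ref{it:9:1} is immediate, and for \ref{it:9:1}$\Rightarrow$\ref{it:9:2} I count lengths: each row of $C$ has product $d_i>1$, hence contains at least one entry $\neq1$, so $\len(\vec{d}\refine\vec{e})\ge\len(\vec{d})$, and equality forces every row of $C$ to have a single non-trivial entry, which is then $d_i$ itself; reading $C$ row by row thus returns $(d_1,\dots,d_\ell)=\vec{d}$, and symmetrically $\vec{e}\refine\vec{d}=\vec{e}$. The same bound shows that \ref{it:9:2} forces $C$ to have exactly one entry $\neq1$ in each row \emph{and} in each column; in particular $\ell=m$, and there is a permutation $\pi$ of $\{1,\dots,\ell\}$ with $c_{i,\pi(i)}=d_i=e_{\pi(i)}$ and $c_{ij}=1$ for $j\neq\pi(i)$, so $\uset{\vec{d}}=\uset{\vec{e}}$ and $\vec{d},\vec{e}$ share a basis.

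To complete \ref{it:9:2}$\Rightarrow$\ref{it:9:3} I would check that $\pi$ obeys \eqref{eq:13} (which contains \eqref{eq:83} as the case $d_i=d_j$), so that $\pi$ is the canonical permutation $\sigma(\vec{d},\vec{e})$ and $\vec{d},\vec{e}$ are associated. Suppose $i<j$, $\gcd(d_i,d_j)>1$, but $\pi(i)>\pi(j)$: the cell $(i,\pi(j))$ is visited strictly before $(i,\pi(i))$ and before $(j,\pi(j))$, and at that moment nothing has been extracted from row $i$ (whose only non-trivial cell $\pi(i)$ comes later) or from column $\pi(j)$ (whose only non-trivial cell lies in row $j>i$), so $c_{i,\pi(j)}=\gcd(d_i,e_{\pi(j)})=\gcd(d_i,d_j)>1$, contradicting $c_{i,\pi(j)}=1$. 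For the converse \ref{it:9:3}$\Rightarrow$\ref{it:9:2}, put $\sigma=\sigma(\vec{d},\vec{e})$ and prove by induction on the cells in loop order that, just before $(i,j)$ is visited, $d^{*}_{i,m+1}$ equals $d_i$ if $j\le\sigma(i)$ and $1$ otherwise, while $e^{*}_{j,\ell+1}$ equals $e_j$ if $i\le\sigma^{-1}(j)$ and $1$ otherwise; this gives $c_{ij}=d_i$ when $j=\sigma(i)$ and $c_{ij}=1$ otherwise, hence $\vec{d}\refine\vec{e}=\vec{d}$ and $\vec{e}\refine\vec{d}=\vec{e}$. The one case that is not pure bookkeeping is $j<\sigma(i)$ with $i':=\sigma^{-1}(j)\ge i$: then $i'\neq i$, so $i'>i$ while $\sigma(i')=j<\sigma(i)$, and the contrapositive of \eqref{eq:13} yields $\gcd(d_i,d_{i'})=1$, whence $c_{ij}=\gcd(d_i,e_j)=\gcd(d_i,d_{i'})=1$ as claimed.

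The main obstacle is the preliminary observation: identifying \autoref{algo:refine} prime by prime with the northwest-corner rule and verifying that the last columns are completely consumed, together with the careful bookkeeping of which cells of $C$ have already been modified when a given cell is visited. Once that is in place, \ref{it:9:1}$\Leftrightarrow$\ref{it:9:2} is merely a length count, and each half of \ref{it:9:2}$\Leftrightarrow$\ref{it:9:3} reduces to inspecting the single cell of $C$ at which the loop order and condition \eqref{eq:13} interact.
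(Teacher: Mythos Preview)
Your proof is correct and, in the direction \ref{it:9:3}$\Rightarrow$\ref{it:9:2}, essentially coincides with the paper's: both verify that when $\vec{d},\vec{e}$ are associated the gcd in step~\ref{step:gcd} equals $d_i$ precisely when $j=\sigma(i)$ and $1$ otherwise, your induction merely spelling out what the paper states as \eqref{eq:73}. The real difference is in the converse. The paper does not pass through \ref{it:9:2}; instead it argues the contrapositive \emph{not}~\ref{it:9:3}~$\Rightarrow$~\emph{not}~\ref{it:9:1} directly, by isolating the minimal index $i^{*}$ at which no partial injection satisfying \eqref{eq:74} exists and showing that the inner loop at $i=i^{*}$ must split either $d^{*}_{i^{*},m+1}$ or some $e^{*}_{j,\ell+1}$ into two nontrivial factors. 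Your route is different: you first extract the matrix $C$ and the row/column product invariants (which the paper establishes only afterwards, in the proof of \autoref{pro:17}, via the matrices $\mat^{(k)}$), then use a pure length count for \ref{it:9:1}$\Rightarrow$\ref{it:9:2} and a single contradictory cell $(i,\pi(j))$ for \ref{it:9:2}$\Rightarrow$\ref{it:9:3}. What your approach buys is a cleaner separation of the three conditions and an argument that reuses the same invariant the paper needs anyway for \autoref{pro:17}; what the paper's minimal-counterexample argument buys is that it needs nothing about the terminal state of the algorithm, only about the first row where things go wrong. Either way the content is the same, and your prime-by-prime identification with the northwest-corner rule is a perfectly good way to justify that the last columns are exhausted.
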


\begin{proof}
  Let $\vec{d} = (d_{1}, \dots, d_{\ell})$ and $\vec{e} = (e_{1},
  \dots, e_{\ell})$ be associated and $\sigma =
  \sigma(\vec{d}, \vec{e})$ the unique permutation satisfying
  \eqref{eq:38} and \eqref{eq:13}. Then we have in \autoref{step:gcd} of \autoref{algo:refine}
  \begin{equation}
    \label{eq:73}
    \gcd(d_{i, m+1}^{*}, e_{j,\ell+1}^{*}) = \begin{cases*}
      d_{i} = e_{j} & if $j = \sigma(i)$, \\
      1 & otherwise,
    \end{cases*}
  \end{equation}
  for all $1 \leq i \leq \ell$ and $1 \leq j \leq m$.  Thus \autoref{algo:refine} returns $\vec{d},
  \vec{e}$ on input $\vec{d}, \vec{e}$ and \ref{it:9:2} and
  \ref{it:9:1} follow.

Conversely, let $\vec{d} = (d_{1}, \dots, d_{\ell})$, $\vec{e} =
(e_{1}, \dots, e_{m})$, and $\ell = \len(\vec{d}) \geq \len(\vec{e}) =
m$.  We assume that $\vec{d}$ and $\vec{e}$ are not associated and
define $i^{*}$ as the minimal index $1 \leq i^{*} \leq \ell$ such
that there is no injective map $\tau \colon \{1, \dots, i^{*}\} \to
\{1, \dots, m\}$ with
\begin{equation}
  \label{eq:74}
  \begin{split}
    d_{i} & = e_{\tau(i)} \text{ for all } 1 \leq i \leq i^{*}, \\
  \tau(i) & < \tau(j) \text{ for all } 1 \leq i < j \leq i^{*} \text{ with } \gcd(d_{i}, d_{j}) > 1
  \end{split}
\end{equation}
in analogy to \eqref{eq:38} and \eqref{eq:13}, respectively.

We have two possible cases for the execution of the
inner loop, steps~\ref{step:inn_in}-\ref{step:inn_out}, for $i = i^{*}$.
\begin{itemize}
  \item If $c= d_{i^{*}, m+1}^{*}$ in \autoref{step:gcd} for some $j$, then $e_{j, \ell+1}^{*}
    \neq d_{i^{*},m+1}^{*}$ (otherwise, we could extend some injective
    $\tau$ for $i^{*} - 1$ by $i^{*} \mapsto j$) and $e_{j, \ell+1}^{*}$ splits into at least
    two nontrivial factors in steps~\ref{step:split_left} and
    \ref{step:split_right}, thus $\len(\vec{e} \refine \vec{d}) \geq
    \len(\vec{e}) + 1$ and $\vec{e} \refine \vec{d} \neq \vec{e}$.
  \item Otherwise $c \neq d_{i^{*},m+1}^{*}$ in \autoref{step:gcd} for all $j$, and
    $d_{i^{*},m+1}^{*}$ splits into at least two nontrivial factors in
    steps~\ref{step:split_left} and
    \ref{step:split_right},
    thus $\len(\vec{d} \refine \vec{e}) \geq \len(\vec{d}) + 1$ and
    $\vec{d} \refine \vec{e} \neq \vec{d}$.
\end{itemize}
\end{proof}

\begin{proposition}
  \label{pro:17} Let $n$ be a positive integer and $\vec{d}$, $\vec{e}$ two ordered factorizations
  of $n$ with length $\ell$ and $m$, respectively.  Then the following holds.
  \begin{ronumerate}
  \item\label{it:17:1} \autoref{algo:refine} works as specified and requires $O(\ell
    m)$ $\gcd$-computations and $O(\ell m)$ additional integer divisions.
  \item\label{it:17:2} We have $
\pD{n, \{\vec{d},\vec{e}\}} = \pD{n, \{\vec{d} \refine \vec{e},
  \vec{e} \}} = \pD{n, \{\vec{d} \refine \vec{e}, \vec{e} \refine \vec{d}\}}$.
\end{ronumerate}
\end{proposition}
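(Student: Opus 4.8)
The plan treats the two items separately, with Proposition~\ref{pro:tortrat} doing the work in \ref{it:17:2}.

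\emph{Part \ref{it:17:1}.} Termination is immediate: \autoref{algo:refine} is the fixed double loop over $1\le i\le\ell$, $1\le j\le m$ — hence $\ell m$ passes through the body, each performing one $\gcd$ (\autoref{step:gcd}) and two integer divisions (\autoref{step:split_right}) — followed by $O(\ell m)$ reindexing operations, which gives the stated counts. For correctness I would first record the loop invariant $d^{*}_{i,1}\cdots d^{*}_{i,m}\cdot d^{*}_{i,m+1}=d_{i}$ and $e^{*}_{j,1}\cdots e^{*}_{j,\ell}\cdot e^{*}_{j,\ell+1}=e_{j}$ for all $i,j$: it holds initially and is preserved because in \autoref{step:split_left} and \autoref{step:split_right} the entries $d^{*}_{ij}$, $e^{*}_{ji}$ are both set to $c$ while $d^{*}_{i,m+1}$, $e^{*}_{j,\ell+1}$ are both divided by $c$. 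In particular $d^{*}_{ij}=e^{*}_{ji}$ throughout, so the algorithm really produces one integer matrix $M=(M_{ij})$, read out by rows as $\vec d^{*}$ and by columns as $\vec e^{*}$. Combining the invariant with $d_{1}\cdots d_{\ell}=n=e_{1}\cdots e_{m}$ and the elementary fact that greedily peeling $\gcd$'s against $b_{1},b_{2},\dots$ exhausts any $a\mid b_{1}b_{2}\cdots$, an induction on $i$ shows $d^{*}_{i,m+1}=1$ once row $i$ is processed and $\prod_{j}e^{*}_{j,\ell+1}=n/(d_{1}\cdots d_{i})$; hence at termination all remainders are $1$, so the $i$-th row of $M$ multiplies to $d_{i}$ and the $j$-th column to $e_{j}$, i.e.\ $\vec d^{*}\mid\vec d$ and $\vec e^{*}\mid\vec e$. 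That $\vec d^{*}$ and $\vec e^{*}$ are associated I would prove one prime $p$ at a time: with $a_{i}=v_{p}(d_{i})$ and $b_{j}=v_{p}(e_{j})$, the algorithm on these exponents is exactly the northwest-corner rule for the transportation problem with supplies $a_{i}$ and demands $b_{j}$, and its support is monotone because the index of the leftmost column with unmet demand never decreases from one row to the next: $v_{p}(M_{ij})>0$ and $v_{p}(M_{i'j'})>0$ with $i<i'$ force $j\le j'$. Passing to a common prime of $M_{ij}$ and $M_{i'j'}$, and adding the easy $i=i'$ case, this is precisely condition~\eqref{eq:13} (hence also \eqref{eq:83}) for the transposition $(i,j)\mapsto(j,i)$ that carries the position of $M_{ij}$ in $\vec d^{*}$ to its position in $\vec e^{*}$; so this transposition equals $\sigma(\vec d^{*},\vec e^{*})$ and the two refinements are associated.

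\emph{Part \ref{it:17:2}.} The middle term is free once the outer equality is proved: by \eqref{eq:24} and $\pD{n,\vec D}=\bigcap_{\vec d\in\vec D}\pD{n,\vec d}$ one has $\pD{n,\{\vec d\refine\vec e,\vec e\refine\vec d\}}\subseteq\pD{n,\{\vec d\refine\vec e,\vec e\}}\subseteq\pD{n,\{\vec d,\vec e\}}$, so the middle set is squeezed between two equal ones. In the outer equality ``$\supseteq$'' is again immediate from \eqref{eq:24}. For ``$\subseteq$'' I would induct on the integer $\len(\vec d\refine\vec e)+\len(\vec e\refine\vec d)-\len(\vec d)-\len(\vec e)\ge0$, which by \autoref{lem:9} vanishes exactly when $\vec d$ and $\vec e$ are associated — in which case $\vec d\refine\vec e=\vec d$, $\vec e\refine\vec d=\vec e$ and all three sets coincide. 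Otherwise, run \autoref{algo:refine} up to its first nontrivial split, at a step $(i^{*},j^{*})$: every earlier pass is either a no-op or an ``absorption'' that merely relocates an entry inside the two matrices, so just before that step the flattened sequences are still $\vec d$ and $\vec e$, and the step replaces the entry $d_{i^{*}}$ by $(c,d_{i^{*}}/c)$ and the entry $e_{j^{*}}$ by $(c,e_{j^{*}}/c)$, where $c=\gcd(d_{i^{*}},e_{j^{*}})$. Given $f\in\pD{n,\{\vec d,\vec e\}}$, write $f=P\circ G\circ Q$ from its $\vec d$-decomposition ($\deg G=d_{i^{*}}$) and $f=P'\circ H\circ Q'$ from its $\vec e$-decomposition ($\deg H=e_{j^{*}}$); applying Proposition~\ref{pro:tortrat} to the length-$2$ regrouping $f=(P\circ G)\circ Q=(P'\circ H)\circ Q'$, and to the further regroupings it produces, yields $G=G_{1}\circ G_{2}$ and $H=H_{1}\circ H_{2}$ with $\deg G_{1}=\deg H_{1}=c$, i.e.\ decompositions of $f$ of the two split shapes. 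Thus the split preserves the collision set, and since the continued run of \autoref{algo:refine} — equivalently, the inductive hypothesis applied to the split pair, whose measure has strictly dropped — ends at $\vec d\refine\vec e$ and $\vec e\refine\vec d$, we conclude. Because Proposition~\ref{pro:tortrat} is invoked, \ref{it:17:2} carries the standing tame hypothesis $\gcd(n,p)=1$.

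\emph{Main obstacle.} The crux is the single-split step of \ref{it:17:2}: Proposition~\ref{pro:tortrat} must be applied so that the common composition factor one extracts has degree \emph{exactly} $c=\gcd(d_{i^{*}},e_{j^{*}})$ — not merely some larger common divisor of the left-hand degrees $\deg P$, $\deg P'$ — and so that $P,Q$ (resp.\ $P',Q'$) are returned unchanged. This is where the bookkeeping of \ref{it:17:1} is needed: the product of the $\vec d^{*}$-entries up to and including the split position equals $d_{1}\cdots d_{i^{*}}$, and likewise the product up to and including $e_{j^{*}}$ in $\vec e^{*}$ equals $e_{1}\cdots e_{j^{*}}$, and these identities are what let one match the divisors of $n$ that arise to the roles $d,e,d^{*},e^{*},\ell,r$ of Proposition~\ref{pro:tortrat}. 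A secondary point is verifying that the reduction leaves the output of \autoref{algo:refine} unchanged, which follows from its greedy, prefix-stable behaviour (equivalently, from the prime-by-prime northwest-corner description above).
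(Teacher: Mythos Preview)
Your plan is correct and follows essentially the same route as the paper: both encode the run of \autoref{algo:refine} by a single $\ell\times m$ matrix whose row and column products reproduce $\vec d$ and $\vec e$, derive the associatedness of the outputs from a coprimality/staircase property of that matrix (your northwest-corner monotonicity is exactly the paper's Lemma~3.8a), and then prove \ref{it:17:2} by peeling off one split at a time with a cascade of applications of Proposition~\ref{pro:tortrat}, the delicate degree identity you single out as the main obstacle being precisely the paper's condition~(3.8d). The one organizational difference is that the paper iterates over \emph{every} step $k$ of the algorithm while holding $\vec e$ fixed (then symmetrizes), so it never needs your ``secondary point'' that re-running \autoref{algo:refine} on the once-split pair reproduces the same $\vec d\refine\vec e$ and $\vec e\refine\vec d$; your induction on the length-excess does need this stability, so be sure to spell it out (your prime-by-prime transportation picture makes it a one-line check).
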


\begin{proof}
  If we ignore the last column of $\vec{d}^{*}$ and $\vec{e}^{*}$,
  respectively, we obtain matrices that are each other's transpose
  before and after every execution of the inner loop in
  \autoref{algo:refine}. We use this property to define a sequence of
  integer matrices
  $\mat^{(k)} \in \ZZ^{(\ell+1) \times (m+1)}$ for $0 \leq k \leq \ell
  m$ to simultaneously capture $\vec{d}^{*}$ and $\vec{e}^{*}$ after
  the inner loop has bee executed $k$ times.

  For $k=0$, let
  \begin{equation}
    \label{eq:44}
\mat^{(0)} = \begin{pmatrix}
1     &  1 & \dots  & 1     & d_{1}    \\
1     & 1 & \dots  & 1     & d_{2}    \\
      & & \vdots &       &          \\
1     & 1 & \dots  & 1     & d_{\ell} \\
e_{1} & e_{2} & \dots  & e_{m} & 1        \\
\end{pmatrix} = (m_{i,j}^{(0)})_{\substack{1 \leq i \leq \ell+1 \\ 1 \leq j
     \leq m+1}}
  \end{equation}
  and for $k = (i-1)m + j > 0$, $1 \leq i \leq \ell$, $1 \leq j \leq m$,
  we define $\mat^{(k)}$ as $\mat^{(k-1)}$ with $m_{i,j}^{(k-1)}$ replaced by
  $c$, $m_{i,m+1}^{(k-1)}$ replaced by $m_{i,m+1}^{(k-1)}/c$, and
  $m_{\ell+1,j}^{(k-1)}$ replaced by $m_{\ell+1,j}^{(k-1)}/c$,
  respectively. Thus, we have the following invariants. For every $0 \leq k
  \leq \ell m$ and every $1 \leq i \leq \ell$, the $i$th row of
  $\mat^{(k)}$ is a factorization of $d_{i}$. Analogously, for every
  $1 \leq j \leq m$, the $j$th column of $\mat^{(k)}$ is a
  factorization of $e_{j}$.

  We have $\vec{d}^{*}$ as $\mat^{(k)}$ with the last row removed, and
  $\vec{e}^{*}$ as $\mat^{(k)}$ with the last column removed and then
  transposed. In particular, the output $\vec{d} \refine \vec{e}$ is
  the first $\ell$ rows of $\mat^{\ell m}$ read as a sequence with
  $1$'s ignored. Analogously, the output $\vec{e} \refine \vec{d}$ is
  the first $m$ columns of $\mat^{\ell m}$ read as a sequence with
  $1$'s ignored.

  \ref{it:17:1} By the invariants of $\mat^{(k)}$ mentioned above, the
  output $\vec{d} \refine \vec{e}$ is a refinement of the input
  $\vec{d}$. Analogously, the output $\vec{e} \refine \vec{d}$ is a
  refinement of the input $\vec{e}$. The outputs also have the same
  basis, namely the entries of $\mat^{(\ell m)}$ different from $1$.

  A
  bijection $\sigma$ on $\{1, \dots, \ell m\}$ is given by $k =
  (i-1)m+j \mapsto (j-1)\ell+i$ with $1\leq i \leq \ell$, $1 \leq j
  \leq m$. And this satisfies \eqref{eq:38} since $d_{k} = c_{i,j} =
  e_{\sigma(k)}$ for all such $i,j$. We also show \eqref{eq:83} for
  $\sigma$. Let $1 \leq k < k' \leq \ell m$ with $k = (i-1)m+j$ and
  $k' = (i'-1)m+j'$. We have to prove, that if $\sigma(k) >
  \sigma(k')$, then $\gcd(d_{k}, d_{k'}) = 1$. The condition is
  equivalent to $i < i'$ and $j > j'$.

  \noindent \textbf{Lemma~3.8a.} \textit{
    Let $1 \leq i \leq m$, $1 \leq j \leq \ell$, $(i - 1)m + j \leq k
    \leq \ell m$ and $c^{(k)}$ the state of \autoref{algo:refine}
    after $k$ executions. Let $R_{i,j} = \prod_{j < j' \leq \ell + 1}
    c_{i,j'}^{(k)}$ and $B_{i,j} = \prod_{i < i' \leq m + 1}
    c_{i',j}^{(k)}$. Then $\gcd(R_{i,j}, B_{i,j}) = 1$. In particular,
    after the algorithm has terminated, we have $\gcd(c_{i,j'},
    c_{i',j}) = 1$ for all $i' > i$, $j' > j$.
}

  \begin{proof}[Proof of Lemma~3.8a]
    Concentrate on the element $c_{i,j'}$.  By construction $\prod_{k
      > j'} c_{i,k}$ and $\prod_{k > i} c_{k,j'}$ are coprime. In
    particular, their factors $c_{i,j}$ and $c_{i',j'}$.
  \end{proof}
  This shows that $\vec{d}^{*}$ and $\vec{e}^{*}$ are associated if we
  restrict $\sigma$ to indices $k$ with $d_{k} > 1$.

  Finally, the only arithmetic costs are the $\gcd$-computations in
  \autoref{step:gcd} and the integer divisions in
  \autoref{step:split_right}.

  \ref{it:17:2} We begin with the first equality. The matrix
  $\vec{d}^{*}$ corresponds to an ordered factorization, when read
  row-by-row and $1$'s ignored. Let $\vec{d}^{(k)}$ correspond to the
  state of the matrix $\vec{d}^{*}$ after the inner loop has been
  executed exactly $k$ times for $0 \leq k \leq \ell m$. Thus
  $\vec{d}^{(0)} = \vec{d}$, $\vec{d}^{(\ell m)} = \vec{d} \refine
  \vec{e}$, and we show inductively
  \begin{equation}
    \label{eq:75} \tag{3.8b}
    \pD{n,  \{\vec{d}^{(k)}, \vec{e}\} } = \pD{n,
    \{\vec{d}^{(k+1)}, \vec{e}\}}
  \end{equation}
  for all $0 \leq k < \ell m$.

  Let $k + 1 = (i - 1)m + j$ with $1 \leq i \leq \ell$, $1 \leq j \leq
  m$. If $c=1$ in \autoref{step:gcd}, then $\vec{d}^{(k+1)} = \vec{d}^{(k)}$ and \eqref{eq:75} holds
  trivially. Otherwise $c > 1$, and $\vec{d}^{(k+1)}$ is the proper
  refinement
  of $\vec{d}^{(k)}$, where the entry $d_{i,m+1}^{*}$ in
  $\vec{d}^{(k)}$ is replaced by
  the pair $(c, d_{i, m+1}^{*}/c)$.

  We have to show that if a polynomial has decomposition degree
  sequences $\vec{d}^{(k-1)}$ and $\vec{e}$, then it also has
  decomposition degree sequence $\vec{d}^{(k)}$. This follows from the
  following generalization of \autoref{pro:tortrat}.

  \noindent \textbf{Lemma~3.8c.} \textit{
  Let $g_{1} \circ g_{2} \circ \dots \circ g_{\ell} = h_{1} \circ
  h_{2} \circ \dots \circ h_{m}$ be two decompositions with degree
  sequence $\vec{d}$ and $\vec{e}$, respectively.  Let $1 \leq i \leq
  \ell$, $1 \leq j \leq m$, $c = \gcd(d_{i}, e_{j})$, and
  \begin{equation}
    \gcd(d_{1}\cdot \dots \cdot d_{i-1} \cdot d_{i}, e_{1} \cdot \dots \cdot
    e_{j-1}) = \gcd(d_{1} \cdot \dots \cdot d_{i-1}, e_{1} \cdot \dots
  \cdot  e_{j-1} \cdot e_{j}). \label{eq:79} \tag{3.8d}
  \end{equation}
  Then there are unique monic original polynomials $u$ and $v$ of
  degree $c$ and $d_{i}/c$, respectively, such that
  \begin{equation}
    \label{eq:80} \tag{3.8e}
    g_{i} = u \circ v.
  \end{equation}
  Therefore, if a monic original polynomial $f$ has decomposition degree sequences $\vec{d}$ and
  $\vec{e}$, then it also has decomposition degree sequence
  $\vec{d}^{*} = (d_{1}, \dots, d_{i-1}, c, d_{i}/c, d_{i+1}, \dots, d_{\ell})$.
  }

  \begin{proof}[Proof of Lemma~3.8c]
  Let $A = g_{1} \circ \dots \circ g_{i-1}$,
  $B = h_{1} \circ \dots \circ h_{j-1}$, and $b = \gcd(\deg(A),
  \deg(B))$. Then \eqref{eq:79} reads $\gcd(\deg(A\circ g_{i}),
  \deg(B)) = \gcd(\deg(A), \deg(B\circ h_{j}))$.  This implies
  $\gcd(\deg(B)/\gcd(\deg(A), \deg(B)), \deg(g_{i})) =
  \gcd(\deg(A)/\gcd(\deg(A), \deg(B)), \deg(h_{j}))$ and since the
  first arguments of both outer $\gcd$'s are coprime, this quantity is
  $1$. This proves
  \begin{align}
    \gcd(\deg(A \circ g_{i}), \deg(B \circ h_{j})) & = \gcd(\deg(A),
    \deg(B)) \cdot \gcd(\deg(g), \deg(h)) \\
    & \quad \cdot \gcd(\frac{\deg(A)}{\gcd(\deg(A), \deg(B))},
    \frac{\deg(h_{j})}{\gcd(\deg(g_{i}, h_{j}))}) \\
    & \quad \cdot \gcd(\frac{\deg(B)}{\gcd(\deg(A), \deg(B))},
    \frac{\deg(g_{i})}{\gcd(\deg(g_{i}, h_{j}))}) \\
    & = \gcd(\deg(A), \deg(B)) \cdot \gcd(\deg(g_{i}), \deg(h_{j})) \\
    & = b c.
    \label{eq:97} \tag{3.8f}
  \end{align}
Then
  \autoref{pro:tortrat} applied to left components of the bi-decompositions
  \begin{equation}
    \label{eq:77} \tag{3.8g}
    A \circ ( g_{i} \circ \dots \circ g_{\ell}) = B \circ (h_{j} \circ
    \dots \circ h_{m})
  \end{equation}
  guarantees the existence of unique, monic original $C, A', B'$ with $\deg(C)
  = b$ and $\gcd(\deg(A'), \deg(B'))=1$, such that
  \begin{equation}
    \label{eq:78} \tag{3.8h}
    A = C \circ A' \text{ and } B = C \circ B'.
  \end{equation}
  We substitute \eqref{eq:78} back into \eqref{eq:77}, ignore the
  common left component $C$ due to the absence of equal-degree
  collisions, and write with the associativity of composition
  \begin{equation}
    (A' \circ  g_{i}) \circ (g_{i+1} \circ \dots \circ g_{\ell}) = (B' \circ h_{j}) \circ
    (h_{j+1} \circ \dots \circ h_{m}).
  \end{equation}
  From \eqref{eq:97}, we have $\gcd(\deg(A' \circ g_{i}), \deg(B'
  \circ h_{j})) = \gcd(d_{i}, e_{j}) = c$. With \autoref{pro:tortrat},
  we obtain some monic original $w$ and $A''$ of degree $c$ and
  $\deg(A' \circ g_{i})/c = \deg(A') \cdot d _{i}/c$, respectively,
  such that
  \begin{equation}
    \label{eq:81} \tag{3.8i}
    A' \circ g_{i} = w \circ A''.
  \end{equation}
  We have $\gcd(\deg(g_{i}), \deg(A'')) = d_{i}/c$ and a final
  application of \autoref{pro:tortrat} to the right components of \eqref{eq:81}
  provides the decomposition for $g$, claimed in \eqref{eq:80}.
  \end{proof}

  To apply this result with $\vec{d} = \vec{d}^{(k)}$ and $\vec{e}$,
  we have to provide \eqref{eq:79}. For $(k+1) = (i-1)m + j$, we split
  $D = d_{1} \cdot \dots \cdot d_{i-1}$ and $E = e_{1} \cdot \dots
  \cdot e_{j-1}$ into their common (left upper subset) $C$ and the
  remainders $R$ and $B$, respectively. By Lemma~3.8a, we have
  $\gcd(R,B) = 1$ and therefore $\gcd(D,E) = C$. The same lemma shows
  $\gcd(d_{i}, B) = 1 = \gcd(R, e_{j})$ and we have
  \begin{equation}
    \label{eq:43}
    \gcd(D d_{i}, E) = C \gcd(d_{i}, B) = C \gcd(R, e_{j}) = \gcd(D, E
    e_{j}),
  \end{equation}
  as required for \eqref{eq:75}.

  Finally, interchanging the r{\^o}les of $\vec{d}$ and $\vec{e}$ yields
  \begin{equation}
    \label{eq:89}
    \pD{n, \{\vec{d}, \vec{e}\}} = \pD{n, \{\vec{d} \refine \vec{e},
      \vec{e}\}} = \pD{n, \{\vec{d}, \vec{e} \refine \vec{d}\}} =
    \pD{n, \{\vec{d} \refine \vec{e}, \vec{e} \refine \vec{d},
      \vec{d}, \vec{e}\}} = \pD{n, \{\vec{d} \refine \vec{e}, \vec{e} \refine \vec{d}\}},
  \end{equation}
  since composition degree sequence $\vec{d} \refine \vec{e}$ implies
  $\vec{d}$ and similarly $\vec{e} \refine \vec{d}$ implies
  $\vec{e}$.
\end{proof}

\begin{example}
  \label{exa:14}
  \begin{sagesilent}
    k = 7
    n = factorial(k)
    dd = [12, 420]
    ee = [14, 360]
    assert prod(dd) == n and prod(ee) == n
  \end{sagesilent}

Let $n = \sage{k} ! = \sage{n}$, $\vec{d} = \sage{tuple(dd)}$, and $\vec{e} = \sage{tuple(ee)}$.  We have as refinements
\begin{equation}
\label{eq:16}
  \begin{split}
    \vec{d} \refine \vec{e} & = \sage{tuple(refine(dd, ee))}, \\
    \vec{e} \refine \vec{d} & = \sage{tuple(refine(ee, dd))},
  \end{split}
\end{equation}
and any $f \in \pD{n, \{\vec{d}, \vec{e}\}}$ has a unique decomposition
  $ f = a \circ g \circ b$ with $a \in \pP{2}$, $g \in \pD{42,
    \{(6,7),(7,6)\}}$, and $b \in \pP{60}$ by \autoref{pro:tortrat}.
\end{example}

Given a set $\vec{D}$ with more than two ordered factorizations, we
repeatedly replace pairs $\vec{d}, \vec{e} \in \vec{D}$ by $\vec{d}
\refine \vec{e}$ and $\vec{e} \refine \vec{d}$, respectively, until we
reach a refinement $\vec{D}^{*}$ invariant under this operation.  This
process terminates by \autoref{lem:9}.  The
result depends on the order of the applied refinements, but any order
ensures the desired properties described by the following proposition.

\begin{proposition}
  \label{pro:7}
  Let $n$ be a positive integer and $\vec{D}$ a set of $c$ ordered
  factorizations of $n$.  There is a set $\vec{D}^{*}$ of at most $c$ ordered
  factorizations of $n$ with the following properties.
\begin{ronumerate}
\item\label{it:7:1} All ordered factorizations of $\vec{D}^{*}$ are pairwise associated.
\item\label{it:7:2} $\pD{n, \vec{D^{*}}} = \pD{n, \vec{D}}$.
\item $\vec{D}^{*}$ can be computed from $\vec{D}$ with at most
  $O(c^{2})$ calls to \autoref{algo:refine}.
\end{ronumerate}
\end{proposition}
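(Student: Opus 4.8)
The plan is to produce $\vec{D}^{*}$ by iterating the binary refinement of \autoref{algo:refine}. Work with a list $(\vec{d}^{(1)},\dots,\vec{d}^{(c)})$ of $c$ ordered factorizations whose underlying set is $\vec{D}$ (allowing repetitions once they arise); as long as two entries $\vec{d},\vec{e}$ of the list are not associated, replace them by $\vec{d}\refine\vec{e}$ and $\vec{e}\refine\vec{d}$, respectively, and let $\vec{D}^{*}$ be the underlying set of the stable list. I would first deal with termination and the size bound. Since $\vec{d}\refine\vec{e}$ is a refinement of $\vec{d}$ and $\vec{e}\refine\vec{d}$ one of $\vec{e}$ (\autoref{eq:24}), every step keeps all entry lengths non-decreasing, and by \autoref{lem:9}, applied to the non-associated pair that was chosen, at least one of the two strictly increases. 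Hence $\sum_{k}\len(\vec{d}^{(k)})$ strictly increases at every performed step, while it never exceeds $c\log_{2}n$, so the process halts. A step removes two entries and inserts at most two (the new factorizations may coincide with each other or with others), so the set $\vec{D}$ stays of size at most $c$, which is the claimed cardinality.

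For the equality $\pD{n,\vec{D}^{*}}=\pD{n,\vec{D}}$, I would use $\pD{n,\vec{D}}=\bigcap_{\vec{a}\in\vec{D}}\pD{n,\vec{a}}$ together with \autoref{pro:17}\ref{it:17:2} (equivalently \autoref{eq:89}) applied to the pair being replaced: $\pD{n,\vec{d}}\cap\pD{n,\vec{e}}=\pD{n,\{\vec{d},\vec{e}\}}=\pD{n,\{\vec{d}\refine\vec{e},\vec{e}\refine\vec{d}\}}=\pD{n,\vec{d}\refine\vec{e}}\cap\pD{n,\vec{e}\refine\vec{d}}$, so the intersection $\pD{n,\vec{D}}$ is an invariant of the process. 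For pairwise-associatedness, note that the process stops exactly when every pair $\vec{d},\vec{e}$ in the list satisfies $\vec{d}\refine\vec{e}=\vec{d}$ and $\vec{e}\refine\vec{d}=\vec{e}$, which by the equivalence of \ref{it:9:2} and \ref{it:9:3} in \autoref{lem:9} means all pairs are associated; this is inherited by the set $\vec{D}^{*}$.

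The main obstacle is the bound on the number of invocations of \autoref{algo:refine}. The naive scheme above can need $\Theta(c\log n)$ productive refinements, which is not $O(c^{2})$ when $n$ has many prime factors. I would instead schedule the refinements as in the computation of a $\gcd$-free (coprime) basis of the multiset $\{d^{(k)}_{i}\}$ — keeping duplicates and the order of factors, in the style of \citet[Section~4.8]{bacsha97}: run once through the $\binom{c}{2}$ pairs $(k,k')$ with $k<k'$, each time replacing $\vec{d}^{(k)},\vec{d}^{(k')}$ by their mutual refinements, followed by one verifying pass. The content to be proved is that this single productive pass already reaches the fixed point, i.e.\ no pair needs to be revisited, so that only $O(c^{2})$ calls occur. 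For squarefree $n$ this reduces to the statement that taking pairwise meets of ordered set partitions in this order installs the common meet in every coordinate, which one checks by tracking how the meet reaches index $c$ and then propagates leftward. The delicate case is squareful $n$, where associated factorizations need not coincide even when they share a basis: there one has to carry along the order-preservation requirement \autoref{eq:13}, using \textbf{Lemma~3.8a} to keep the factors split off on the left coprime to those split off below, exactly as in the proof of \autoref{pro:17}.
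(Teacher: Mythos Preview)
Your treatment of termination, of the invariance $\pD{n,\vec D}=\pD{n,\vec D^{*}}$ via \autoref{pro:17}\ref{it:17:2}, and of pairwise associatedness at the fixed point via \autoref{lem:9} is correct. The genuine gap is exactly where you locate it: the $O(c^{2})$ bound. You propose a single sweep through the $\binom{c}{2}$ pairs, but the assertion that one sweep already reaches the fixed point is left as a sketch for squarefree $n$ and explicitly deferred (``delicate'') for squareful $n$. As written, this is not a proof of~(iii).

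The paper closes this gap by a different organisation: induction on $c$ together with a stability lemma you do not state. Assuming $\vec d^{(1)},\dots,\vec d^{(c-1)}$ are already pairwise associated, set $\vec f=\vec d^{(c)}$ and take
\[
\vec D^{*}=\bigl\{\vec d^{(1)}\refine\vec f,\;\dots,\;\vec d^{(c-1)}\refine\vec f,\;\vec f\refine\vec d^{(1)}\bigr\},
\]
at the cost of $c-1$ further calls, hence $\binom{c}{2}=O(c^{2})$ in total. The substantive step is \emph{Lemma~3.11a}: if $\vec d$ and $\vec e$ are associated, then so are $\vec d\refine\vec f$ and $\vec e\refine\vec f$ for any $\vec f$; its proof compares the two refinement matrices row by row and uses Lemma~3.8a to derive a contradiction from the first discrepancy. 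Together with the fact that $\vec d^{(1)}\refine\vec f$ and $\vec f\refine\vec d^{(1)}$ are associated (they are the algorithm's joint output) and the transitivity of associatedness, this gives~(i) immediately; (ii) follows as you argue. Your single-sweep scheme, read in the order in which the paper's induction unrolls, would in fact be justified by this same lemma, so the missing ingredient in your argument is precisely Lemma~3.11a; the coprime-basis analogy and ``propagates leftward'' do not replace it, least of all in the squareful case you yourself flag.
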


\begin{proof}
  For $c = 1$, we have $\vec{D} = \{d\}$ and $\vec{D}^{*} = \{d\}$
  satisfies all claims.

  For $c = 2$, we have $\vec{D} = \{\vec{d}, \vec{e}\}$ for ordered
  factorizations $\vec{d} \neq \vec{e}$, and $\vec{D}^{*} = \{\vec{d}
  \refine \vec{e}, \vec{e} \refine \vec{d}\}$ satisfies all claims by
  \autoref{pro:17}.

  Let $c > 2$ and $\vec{D} = \{\vec{d}^{(1)}, \dots,
  \vec{d}^{c}\}$. By induction assumption, we can assume all $\vec{d}^{(i)}$ for
  $1 \leq i < c-1$ be pairwise associated. Let $\vec{d}^{(c)} =
  \vec{f}$ and $\vec{D}^{*} = \{\vec{d}^{(1)} \refine \vec{f},
  \vec{d}^{(2)} \refine \vec{f}, \dots, \vec{d}^{(c-1)} \refine
  \vec{f}, \vec{f} \refine \vec{d}^{(1)} \}$. Clearly $\pD{n, \vec{D}}
  = \pP{n, \vec{D}^{*}}$ and it remains to show that all elements of
  $\vec{D}^{*}$ are pairwise associated.

  By construction, we have $\vec{d}^{(1)} \refine \vec{f}$ associated
  with $\vec{f} \refine \vec{d}^{(1)}$ and by transitivity of
  associatedness the following
  lemma suffices.

  \noindent \textbf{Lemma~3.11a.} \textit{
    If $\vec{d}^{*}$ and $\vec{e}^{*}$ are associated, then so are
    $\vec{d}^{*} \refine \vec{f}$ and $\vec{e}^{*} \refine \vec{f}$
    for any factorization $\vec{f}$.
  }

  \begin{proof}
    Let $\sigma = \sigma(\vec{d}^{*}, \vec{e}^{*})$ and compare the
    matrices
    \begin{align}
      \mat = \mat(\vec{d}^{*}, \vec{f}) \text{ and } \nat =
      \mat(\vec{e}^{*}, \vec{f}).
    \end{align}
    The claimed bijection between the indices of $\mat$ and $\nat$ is given by
    mapping row $i$ to row $\sigma(i)$ (followed by identity on the
    columns).

    Assume for contradiction that $i$ is the minimal row index such
    that $\mat_{i, *} \neq \nat_{\sigma(i), *}$ and $j$ is the minimal
    column index such that $\mat_{i,j} \neq \nat_{\sigma(i), j}$.

    Let $\nat_{\sigma(i),j} = a \mat_{i,j}$ with $a > 1$. Then there
    is a column $j' > j$, such that $a \mid \mat_{i,j'}$, since the
    rows $\mat_{i,*}$ and $\nat_{\sigma(i),*}$ are both factorizations
    of $d_{i} = e_{\sigma(i)}$. Also there is a row $i' > i$, such
    that $a \mid \mat_{i',j}$, since the earlier occurrences of $a$ in
    that column are pairwise matched.

    By Lemma~3.8a, this is a contradiction. And analogously, if
    $\mat_{i,j} = a \nat_{\sigma(i),j}$ with $a > 1$.
  \end{proof}
\end{proof}

Any $\vec{D}^{*}$ satisfying \autoref{pro:7}\ref{it:7:1}-\ref{it:7:2} is
called a \emph{normalization} of $\vec{D}$.  For a normalized $\vec{D}
= \{ \vec{d}^{(k)} \colon 1 \leq k \leq c$, we have the same basis
$\uset{\vec{d}^{(k)}}$ for all $1 \leq k \leq c$ and call this multiset
the \emph{basis} of $\vec{D}$, denoted by $\uset{\vec{D}}$.

\begin{example}
  \label{exa:9}
  \begin{sagesilent}
    ff = [20, 252]
  \end{sagesilent}

  We add the ordered factorization $\vec{f} = \sage{tuple(ff)}$ to $\vec{D} =
  \{ \vec{d}, \vec{e} \}$ of \autoref{exa:14} and obtain from
  \eqref{eq:16} through refinement with $\vec{f}$
\begin{equation}
\label{eq:6} \tag{3.12b}
  \begin{split}
    \vec{d}^{*} = (\vec{d} \refine \vec{e}) \refine \vec{f} & =
    \sage{tuple(refine(refine(dd,ee),ff))}, \\
    \vec{e}^{*} = (\vec{e} \refine \vec{d}) \refine \vec{f} & =
    \sage{tuple(refine(refine(ee,dd),ff))}, \\
    \vec{f}^{*} = (\vec{f} \refine \vec{d}) \refine \vec{e} & =
    \sage{tuple(refine(refine(ff,dd),ee))}.
  \end{split}
\end{equation}
  Any $f \in \pP{n, \{\vec{d}, \vec{e}, \vec{f} \}} = \pP{n, \{\vec{d}^{*}, \vec{e}^{*}, \vec{f}^{*}\}}$ has a unique
  decomposition $f = a \circ g \circ b$ with $a \in \pP{2}$, $g \in
  \pD{210, \{(2,3,7,5), (7,2,3,5), (2,5,3,7)\}}$, and $b \in \pP{12}$.
  The normalized set $\{\vec{d}^{*}, \vec{e}^{*}, \vec{f}^{*}\}$ has
  basis $\{2,2,3,5,7,12\}$.
\end{example}

\subsection{The relation graph of \large $\vec{D}$}
\label{sec:size-comp-copr}

An ordered factorization $\vec{d} = (d_{1}, d_{2}, \dots, d_{\ell})$
defines a relation $\prec_{\vec{d}}$ on its basis $\uset{\vec{d}} =
\{d_{1}, d_{2}, \dots, d_{\ell} \}$ by
\begin{equation}
  \label{eq:17}
  d_{i} \prec_{\vec{d}} d_{j} \text{ for } 1 \leq i < j \leq \ell.
\end{equation}
In other words, $d_{i} \prec_{\vec{d}} d_{j}$ if $d_{i}$ appears
before $d_{j}$ in the ordered factorization $\vec{d}$, where we
distinguish between repeated factors in the multiset $\uset{\vec{d}}$.  We define the
\emph{relation graph} $G_{\vec{d}}$ as directed graph with
\begin{itemize}
\item vertices $\uset{\vec{d}} =
\{d_{1}, d_{2}, \dots, d_{\ell} \}$ and
\item directed edges $(d_{j}, d_{i}) = d_{i} \gets d_{j}$ for $d_{i} \prec_{\vec{d}} d_{j}$.
\end{itemize}
This graph is a \emph{transitive tournament}, that is a complete graph
with directed edges, where a path $d \gets e \gets f$ implies an edge $d \gets f$ for any
vertices $d,e,f \in \uset{\vec{d}}$.

Now, let $\vec{D} = \{\vec{d}^{(1)}, \vec{d}^{(2)}, \dots, \vec{d}^{(c)}\}$
be a normalized set of $c$ ordered factorizations with common basis $\uset{\vec{D}} = \{d_{1}, d_{2}, \dots,
d_{\ell} \}$.  The relation $\prec_{\vec{D}}$ is the union of the
relations $\prec_{\vec{d}^{(k)}}$ for $1 \leq k \leq c$ and the \emph{relation graph} $G_{\vec{D}}$ is the union of the
relation graphs $G_{\vec{d}^{(k)}}$ for $1 \leq k \leq c$.  The
undirected graph underlying $G_{\vec{D}}$ is still complete, but may be
intransitive.  See \autoref{fig:rel-graph} for the relation graphs of
\autoref{exa:14} and \autoref{exa:9}.

We can express the relation $\prec_{\vec{D}}$ with the permutations
\eqref{eq:38}.  Let $\sigma_{k} = \sigma(\vec{d}^{(1)},
\vec{d}^{(k)})$ for $1 \leq k \leq c$.  Then $\sigma_{1}$ is the
identity on $1, 2, \dots, \ell$ and we have
\begin{equation}
  \label{eq:39}
  d_{i} \prec_{\vec{d}^{(k)}} d_{j}
\end{equation}
if and only if $\sigma_{k}(i) < \sigma_{k}(j)$.

\begin{sagesilent}
DD1 = refined_closure([dd, ee])
G1, dd1 = graph_from_relations(DD1)
G1.relabel(dd1)

DD2 = refined_closure([dd, ee, ff])
G2, dd2 = graph_from_relations(DD2)
dd2[0] = r"2'"
G2.relabel(dd2)

vert_pos = G2.layout_circular()

angle = N(pi/3)
rot_matrix = matrix([[cos(angle), -sin(angle)],[sin(angle), cos(angle)]])

# Hopefully, the positions that work perfectly!
new_pos = {}
for i in vert_pos:
    new_pos[i] = tuple(rot_matrix * vector(vert_pos[i]))
\end{sagesilent}

\begin{figure*}
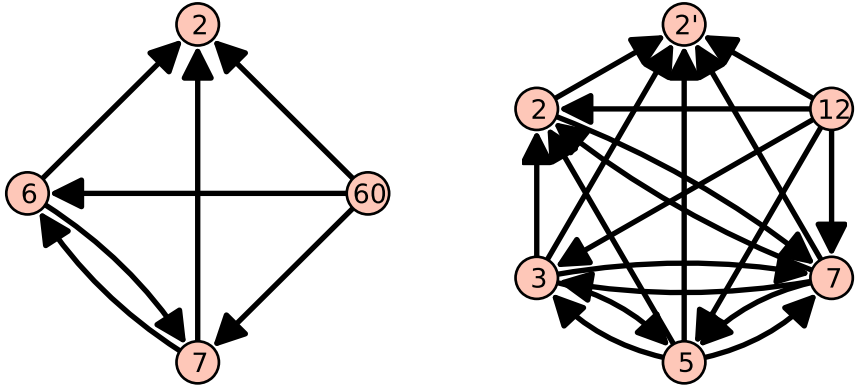

  \centering
\sageplot{G1.plot(layout='circular',
  vertex_size=1000, figsize=[2.3,2.3])}
\sageplot{G2.plot(
  vertex_size=1000, pos=new_pos, figsize=[2.3,2.3])}
  \caption{Relation graphs of (\ref{eq:16}) and (\ref{eq:6}); in the
    latter, $2'$ denotes the first $2$ in each ordered factorization.}
  \label{fig:rel-graph}
\end{figure*}
\noeqref{eq:6}

A Hamiltonian path $\vec{e} = e_{1} \gets \dots \gets e_{\ell}$ in a graph $G$ visits each vertex exactly
once. We call $\vec{e}$ \emph{transitive}, if its transitive closure is a
subgraph of $G$.  In other words, $\vec{e}$ is transitive if $e_{i}
\gets e_{j}$ is an edge in $G$ for all $1 \leq i < j \leq \ell$. For a relation graph $G$ with vertices $d_{1},d_{2}, \dots, d_{\ell}$ and
$n = \prod_{1 \leq i \leq \ell} d_{i}$, we define
\begin{equation}
  \label{eq:19}
\begin{split}
  \pD{G} & = \{ f \in \pP{n} \colon \text{ for every transitive Hamiltonian path } e_{1} \gets \dots \gets e_{\ell} \\
& \quad \text{ in $G$, there is a decomposition } f = g_{1} \circ g_{2} \circ \dots
  \circ g_{\ell} \\
& \quad \text{ with } \deg g_{i} = e_{i} \text{ for } 1 \leq
  i \leq \ell\}.
\end{split}
\end{equation}

If $G = \{d\}$ is a singleton, we have $\pD{G}= \pP{d}$.

\addtocounter{equation}{-1}

\begin{proposition}
  \label{pro:13}
  Let $n$ be a positive integer, $\vec{D}$ a normalized set of ordered
  factorizations of $n$, and $G$ the relation graph of $\vec{D}$.
  We have
  \begin{equation}
    \label{eq:4}
    \pD{n,\vec{D}} = \pD{G}.
  \end{equation}
\end{proposition}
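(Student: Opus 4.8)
The inclusion ``$\supseteq$'' is immediate: writing $\vec{d}^{(k)} = (d_1^{(k)}, \dots, d_\ell^{(k)})$, the sequence $d_1^{(k)} \gets \dots \gets d_\ell^{(k)}$ is a Hamiltonian path of $G$ whose transitive closure already lies in the transitive tournament $G_{\vec{d}^{(k)}} \subseteq G$, hence it is a transitive Hamiltonian path of $G$; so any $f \in \pD{G}$ lies in $\pD{n, \vec{d}^{(k)}}$ for each $k$ and thus in $\pD{n, \vec{D}} = \bigcap_k \pD{n, \vec{d}^{(k)}}$. For ``$\subseteq$'' the plan is to induct on the common length $\ell$ of the factorizations in $\vec{D}$. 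If $\#\vec{D} = 1$ (in particular if $\ell = 1$), then $G$ is a transitive tournament, its only transitive Hamiltonian path is $(d_1, \dots, d_\ell)$, and $\pD{G} = \pD{n, \vec{d}} = \pD{n, \vec{D}}$. So let $\#\vec{D} \geq 2$ (hence $\ell \geq 2$), fix $f \in \pD{n, \vec{D}}$ and a transitive Hamiltonian path $\vec{e} = (e_1, \dots, e_\ell)$ of $G$; I would peel off an outermost component of degree $e_1$ and recurse. For $1 \leq k \leq c$ let $m_k$ be the position of $e_1$ in $\vec{d}^{(k)}$ and $P_k = \prod_{i \leq m_k} d_i^{(k)}$; the length-$m_k$ prefix of the $\vec{d}^{(k)}$-decomposition of $f$ gives $f = \pi_k \circ \rho_k$ with $\deg \pi_k = P_k$.

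The crux is the claim $\gcd_{k} P_{k} = e_1$. Since $\vec{D}$ is normalized, \eqref{eq:13} forces any two basis elements with nontrivial common factor to occur in the same order in every $\vec{d}^{(k)}$; and since $e_1$ is the source of the transitive Hamiltonian path $\vec{e}$, the arc $e_1 \gets d$ is in $G$ for every other basis element $d$. If a basis element $d$ occurring before $e_1$ in some $\vec{d}^{(k)}$ had $\gcd(d, e_1) > 1$, it would precede $e_1$ in all of them, so $e_1 \gets d$ would not be an arc of $G$, a contradiction; thus $P_k = e_1 Q_k$ with $Q_k = \prod_{i < m_k} d_i^{(k)}$ coprime to $e_1$. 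Likewise, if a prime divided every $Q_k$, then the first element carrying that prime in the (consistent) order of the prime-divisible basis elements would precede $e_1$ in every $\vec{d}^{(k)}$, again killing the arc $e_1 \gets (\,\cdot\,)$; so $\gcd_k Q_k = 1$ and $\gcd_k P_k = e_1$. Applying \autoref{pro:tortrat} successively to the decompositions $f = \pi_k \circ \rho_k$ now produces a common left component $g$ of degree $\gcd_k P_k = e_1$, so that $f = g \circ f_1$ with $\deg f_1 = n/e_1$ and $\pi_k = g \circ u_k$ for monic original $u_k$.

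It remains to transfer the collision structure to $f_1$. Each $\pi_k$ carries the decompositions $(d_1^{(k)}, \dots, d_{m_k-1}^{(k)}, e_1)$ and $(e_1, P_k/e_1)$, the latter from $\pi_k = g \circ u_k$; since $e_1$ is coprime to $d_1^{(k)}, \dots, d_{m_k-1}^{(k)}$, one computes $(e_1, P_k/e_1) \refine (d_1^{(k)}, \dots, d_{m_k-1}^{(k)}, e_1) = (e_1, d_1^{(k)}, \dots, d_{m_k-1}^{(k)})$, so \autoref{pro:17}\ref{it:17:2} shows $\pi_k$ also decomposes with degree sequence $(e_1, d_1^{(k)}, \dots, d_{m_k-1}^{(k)})$. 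By uniqueness of the tame decomposition with prescribed component degrees, the degree-$e_1$ left factor of this decomposition is $g$, hence $u_k$ decomposes with degree sequence $(d_1^{(k)}, \dots, d_{m_k-1}^{(k)})$ and $f_1 = u_k \circ \rho_k$ decomposes with the degree sequence obtained from $\vec{d}^{(k)}$ by deleting $e_1$. Therefore $f_1 \in \pD{n/e_1, \vec{D}'}$, where $\vec{D}'$ collects these deleted factorizations; $\vec{D}'$ is again normalized (deleting a common divisor preserves \eqref{eq:13}), its relation graph $G'$ is $G$ with the vertex $e_1$ removed, and $(e_2, \dots, e_\ell)$ is a transitive Hamiltonian path of $G'$. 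The induction hypothesis gives $\pD{n/e_1, \vec{D}'} = \pD{G'}$, so $f_1 = g_2 \circ \dots \circ g_\ell$ with $\deg g_i = e_i$, and $f = g \circ g_2 \circ \dots \circ g_\ell$ realizes the degree sequence $\vec{e}$.

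I expect the identity $\gcd_k P_k = e_1$ to be the main obstacle: it is exactly where the meaning of ``associated'' must be played off against transitivity of $\vec{e}$, prime by prime. The transfer to $f_1$ is then routine modulo \autoref{pro:17}, uniqueness of tame decompositions, and minor care with the degenerate case $m_k = \ell$ (empty suffix $\rho_k$) and with a repeated occurrence of $e_1$ in the basis.
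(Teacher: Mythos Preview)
Your argument is correct and takes a genuinely different route from the paper's.  The paper proves ``$\subseteq$'' by \emph{local swaps}: starting from an arbitrary $\vec{d}\in\vec{D}$, it bubble-sorts $\vec{d}$ into the target path $\vec{e}$, checks that every swap of adjacent entries happens along a bidirectional edge of $G$ and that every intermediate state is again a transitive Hamiltonian path, and then realises each swap on the polynomial side by a single Ritt move (Claim~3.13a, itself an application of Lemma~3.8c).  You instead argue by \emph{global extraction and induction on $\ell$}: the combinatorial identity $\gcd_{k}P_{k}=e_{1}$---which is indeed the heart of the matter, and your prime-by-prime argument for it is clean---lets iterated \autoref{pro:tortrat} produce a common left component $g$ of degree $e_{1}$, after which \autoref{pro:17}\ref{it:17:2} shuffles $e_{1}$ to the front of each $\vec{d}^{(k)}$ so that the induction hypothesis applies to $f_{1}$.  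Both proofs rest on the same engine (Lemma~3.8c underlies both \autoref{pro:17} and the paper's Ritt moves), but the paper's version stays closer to the Zieve--M\"uller picture and avoids re-verifying that $\vec{D}'$ is normalized with relation graph $G\setminus\{e_{1}\}$, while your version trades the explicit swap sequence for a sharper combinatorial statement and a transparent inductive shape; the boundary cases you flag ($m_{k}\in\{1,\ell\}$ and a repeated value $e_{1}$ in the basis) are, as you suspect, harmless.
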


\begin{proof}
  Every transitive tournament $G_{\vec{d}}$ for $\vec{d} \in \vec{D}$,
  has $\vec{d}$ as its unique transitive Hamiltonian path. Since $G$
  is the union of all such $G_{\vec{d}}$, we
  have ``$\supseteq$''.

  For ``$\subseteq$'', we have to show that every polynomial with
  decomposition degree sequences $\vec{D}$ also has decomposition
  degree sequence $\vec{d}^{*}$ for every transitive Hamiltonian path
  $\vec{d}^{*}$ in $G$. We proceed on two levels. First, we derive all
  transitive Hamiltonian paths in $G$ from ``twisting'' the paths
  given by $\vec{D}$. Second, we show that the corresponding ``twisted''
  decomposition degree sequences follow from the given ones.

  Let $\vec{d}^{*}$ be a transitive Hamiltonian path in $G$ and
  $\vec{d} \in \vec{D}$ arbitrary. We use \textsc{Bubble-Sort} to
  transform $\vec{d}$ into $\vec{d}^{*}$ and call the intermediate
  states after $k$ passes $\vec{d}^{(k)}$, $0 \leq k \leq c$, such
  that $\vec{d}^{(0)} = \vec{d}$ and $\vec{d}^{(c)} = \vec{d}^{*}$.

  \begin{algorithm2e}[H]
    \caption{\textsc{Bubble-Sort} $\vec{d}$ according to $\vec{d}^{*}$}
    \label{algo:bubble}

  $\ell \gets \len(\vec{d})$\;
  $k \gets 0$, $\vec{d}^{(0)} \gets \vec{d}$\;
  \While{$\vec{d}^{(k)} \neq \vec{d}^{*}$}{
    $k \gets k+1$, $\vec{d}^{(k)} \gets \vec{d}^{(k-1)}$ \tcc{copy
      previous state}
    \For{$i = 1, \dots, \ell - 1$}{
      $\sigma = \sigma(\vec{d}^{(k)}, \vec{d}^{*})$ \label{step:bubble:sigma}\;
      \If{$\sigma(i) > \sigma(i+1)$}{
        $(d_{i}^{(k)}, d_{i+1}^{(k)}) \gets (d_{i+1}^{(k)},
        d_{i}^{(k)})$ \label{step:bubble:swap} \tcc{swap}
      }
    }
  }
  $c \gets k$\;

  \end{algorithm2e}
\addtocounter{equation}{-1}

  In other words, $\vec{d}^{(k)}$ is obtained from $\vec{d}^{(k-1)}$
  by at most $\ell - 1$ ``swaps'' of adjacent vertices. \autoref{fig:swap}
  visualizes a swap of $d_{i}^{(k)}$ and
  $d_{i+1}^{(k)}$ as in \autoref{step:bubble:swap}. The fundamental
  properties of \textsc{Bubble-Sort} guarantee correctness and $c \leq \ell
  (\ell-1)/2$, see \citet[Problem~2.2]{corlei09}.

    \begin{sagesilent}
g3 = {1: [0, 2], 2: [1], 3: [2]}
g4 = {1: [2], 2: [0, 1], 3: [1]}

G3 = DiGraph(g3, multiedges=True, implementation='c_graph', vertex_labels=True)
G4 = DiGraph(g4, multiedges=True, implementation='c_graph',
vertex_labels=True)

dd = [r"$d_{i-1}$", r"$d_{i}$", r"$d_{i+1}$", r"$d_{i+2}$"]
G3.relabel(dd)
G4.relabel(dd)

vert_pos = G3.layout_circular()
new_pos = {}

angle = N(0*pi/4)
rot_matrix = matrix([[cos(angle), -sin(angle)],[sin(angle), cos(angle)]])
stretch = 1

new_pos[r"$d_{i-1}$"] = tuple(rot_matrix * vector((stretch*(-2), stretch*0)))
new_pos[r"$d_{i}$"] = tuple(rot_matrix * vector((stretch*0, stretch*1)))
new_pos[r"$d_{i+1}$"] = tuple(rot_matrix * vector((stretch*0, stretch*(-1))))
new_pos[r"$d_{i+2}$"] = tuple(rot_matrix * vector((stretch*2, stretch*0)))
    \end{sagesilent}

\begin{figure*}
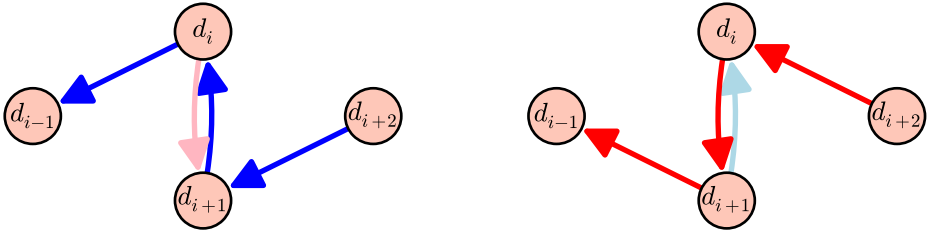

  \centering
    \sageplot[
    ]{G3.plot(
    edge_colors={'blue':[(dd[3],dd[2],None),(dd[2],dd[1],None),(dd[1],dd[0],None)], 'lightpink':[(dd[1],dd[2],None)]},
    vertex_size=7000,
    pos=new_pos,
    figsize=[2.3, 2.3])}
    \sageplot[
    ]{G4.plot(
    edge_colors={'red':[(dd[3],dd[1],None),(dd[1],dd[2],None),(dd[2],dd[0],None)], 'lightblue':[(dd[2],dd[1],None)]},
    vertex_size=7000,
    pos=new_pos,
    figsize=[2.3, 2.3])}
  \caption{A ``swap'' between two transitive Hamiltonian paths $d_{i-1} \gets
    d_{i} \gets d_{i+1} \gets d_{i+2}$ and $d_{i-1} \gets
    d_{i} \gets d_{i+1} \gets d_{i+2}$ along the bidirectional edge
    between $d_{i}$ and $d_{i+1}$.}
  \label{fig:swap}
\end{figure*}

  Furthermore, the following holds.
  \begin{ronumerate}
  \item\label{it:bu:1} Every pair $(d_{i}^{(k)}, d_{i+1}^{(k)})$ of
    swapped vertices in \autoref{step:bubble:swap} is connected by a bidirectional edge in $G$.
  \item\label{it:bu:2} Every $\vec{d}^{(k)}$, $0 \leq k \leq c$, is a
    transitive Hamiltonian path in $G$.
  \end{ronumerate}
  For \ref{it:bu:1}, we have the edge $d_{i}^{(k)} \gets d_{i+1}^{(k)}$ from
  $\vec{d}^{(k-1)}$ and the edge $d_{\sigma(i+1)}^{*} = d_{i+1}^{(k)} \gets
  d_{i}^{(k)} = d_{\sigma(i)}^{*}$ from $\vec{d}^{*}$ with $\sigma$ as
  in \autoref{step:bubble:sigma}.

  For $k=0$, \ref{it:bu:2} holds by definition. For $k > 0$ it
  follows inductively from $k - 1$, since a swap merely replaces the 4-subpath $d_{i-1} \gets
    d_{i} \gets d_{i+1} \gets d_{i+2}$ by $d_{i-1} \gets d_{i+1} \gets
    d_{i} \gets d_{i+2}$, where the outer edges are guaranteed in $G$
    by transitivity of $\vec{d}^{(k-1)}$ and the inner edge by
    \ref{it:bu:1}. Thus, the swapped path is also a transitive
    Hamiltonian path in $G$.

  Now, we mirror the ``swaps'' of vertices by ``Ritt moves'' of
  components as introduced by \cite{ziemue08}.

\noindent \textbf{Claim~3.13a} (Ritt moves). \textit{
    Let $g_{1} \circ \dots \circ g_{\ell} = h_{1} \circ \dots \circ
    h_{\ell}$ be decompositions with degree sequence $\vec{d}$ and
    $\vec{e}$, respectively.  Let $\vec{d}$ and $\vec{e}$ be
    associated, $\sigma = \sigma(\vec{d}, \vec{e})$, and $1 \leq i <
    \ell$ with $\sigma(i) > \sigma(i+1)$. Then
    \begin{equation}
      \label{eq:94}
      g_{i} \circ g_{i+1} = g_{i}^{*} \circ g_{i+1}^{*}
    \end{equation}
    with $\deg(g_{i}) = \deg(g_{i+1}^{*})$ and $\deg(g_{i+1})
    = \deg(g_{i}^{*})$. Therefore, if some monic original polynomial
    $f$ has decomposition degree sequences $\vec{d}$ and $\vec{e}$,
    it also has the decomposition degree sequence $\vec{d}^{*} = (d_{1}, \dots, d_{i-1}, d_{i+1},
    d_{i}, d_{i+2}, \dots, d_{\ell})$.
  }

  The claim is based on the following lemma.

  \noindent \textbf{Lemma~3.13b.} \textit{
    \label{lem:2}
    Let $\vec{d}$ and $\vec{e}$ be associated ordered factorizations,
    $\sigma = \sigma(\vec{d}, \vec{e})$, $1 \leq i \leq
    \len(\vec{d})$, and $j = \sigma(i)$. Then
    \begin{align}
      \label{eq:66}
      \gcd(d_{1} \cdot \dots \cdot d_{i-1}, e_{1} \cdot \dots \cdot
      e_{j-1}) & = \gcd(d_{1}\cdot \dots \cdot d_{i-1} \cdot d_{i}, e_{1}\cdot \dots
      \cdot e_{j-1}) \\
      & = \gcd(d_{1} \cdot \dots \cdot d_{i-1}, e_{1} \cdot \dots
      \cdot e_{j-1} \cdot e_{j}).
    \end{align}
    In particular, \eqref{eq:79} holds.
  }

  \begin{proof}[Proof of Lemma~3.13b]
    For any $1 \leq k < j$, with
    $\gcd(e_{k}, e_{j}) = \gcd(e_{k}, d_{i}) > 1$, we have
    $\sigma^{-1}(k) < i$ due to \eqref{eq:13}. In other words, $\sigma^{-1}$ maps all
    indices $1 \leq k < j$, where  $\gcd(e_{k}, d_{i}) > 1$, into the set $\{1,
    \dots, i-1\}$. Therefore
    \begin{align}
      \label{eq:69}
      \gcd(\frac{e_{1}\cdot \dots \cdot e_{j-1}}{\gcd(d_{1} \cdot
        \dots \cdot d_{i-1}, e_{1}\cdot \dots \cdot e_{j-1})}, d_{i})
      & = 1, \\
      \gcd(d_{1}\cdot \dots \cdot d_{i-1} \cdot d_{i}, e_{1}\cdot
      \dots \cdot e_{j-1}) & = \gcd(\gcd(d_{1} \cdot \dots \cdot
      d_{i-1}, e_{1}\cdot \dots \cdot e_{j-1}) d_{i},
      e_{1}\cdot \dots \cdot e_{j-1}) \\
      & = \gcd(d_{1} \cdot \dots \cdot d_{i-1}, e_{1}\cdot \dots \cdot
      e_{j-1}).
    \end{align}
  \end{proof}

  Let $j' = \sigma(i+1) < \sigma(i) = j$, $A = g_{1} \circ \dots \circ
  g_{i-1}$, $C = g_{i+2} \circ \dots \circ g_{\ell}$, $A' = h_{1}
  \circ \dots \circ h_{j'-1}$, $B' = h_{j'+1} \circ \dots \circ
  h_{j-1}$, and $C' = h_{j+1} \circ \dots \circ h_{\ell}$, such that
  \begin{equation}
    \label{eq:85}
      A \circ g_{i} \circ g_{i+1} \circ C  = A' \circ h_{j'} \circ B'
      \circ h_{j} \circ C'.
  \end{equation}
  Lemma~3.13b for $i$ and $i+1$ yields
  \begin{align}
    \gcd(\deg(A \circ g_{i}), \deg(A' \circ h_{j'} \circ B')) & = \gcd(\deg(A),
    \deg(A' \circ h_{j'} \circ B')), \\
    \gcd(\deg(A \circ g_{i} \circ g_{i+1}), \deg(A')) & = \gcd(\deg(A
    \circ g_{i}), \deg(A' \circ h_{j'})),
  \end{align}
  respectively. From the former, we derive
  \begin{align}
    \label{eq:71}
    1 & = \gcd(g_{i}, \frac{\deg(A' \circ h_{j'} \circ B')}{\gcd(\deg(A),
      \deg(A' \circ h_{j'} \circ B'))}) \\
      & = \gcd(g_{i}, \frac{\deg(A'
      \circ h_{j})}{\gcd(\deg(A),
      \deg(A' \circ h_{j'}))}).
  \end{align}
  And then continue the latter as
  \begin{align}
& \gcd(\deg(A \circ g_{i} \circ g_{i+1}), \deg(A')) \\
& = \gcd(\deg(A \circ g_{i}), \deg(A' \circ h_{j'})) \\
&  = \gcd(\deg(A), \deg(A' \circ h_{j'})) \cdot \gcd(g_{i}, \frac{\deg(A'
      \circ h_{j'})}{\gcd(\deg(A),
      \deg(A' \circ h_{j'}))}) \\
   & = \gcd(\deg(A), \deg(A' \circ h_{j'})). \label{eq:98} \tag{3.13c}
  \end{align}
  Let $G = g_{i} \circ g_{i+1}$ and $H = h_{j}$. We have $\gcd(d_{i},
  d_{i+1}) = 1$ due the ``twisting condition'' $\sigma(i+1) <
  \sigma(i)$  and therefore $\gcd( \deg(G), \deg(H)) = d_{i+1}$. We
  apply Lemma~3.8c with $g_{i} = G$, $h_{j} = H$, and $c =
  d_{i+1}$ in the notation of that claim, and find, since \eqref{eq:98} provides
  condition \eqref{eq:79},
  \begin{equation}
    \label{eq:72}
    G = g_{i}^{*} \circ g_{i+1}^{*}
  \end{equation}
  with $\deg(g_{i}^{*}) = d_{i+1}$ and $\deg(g_{i+1}^{*}) = d_{i}$ as
  required.

Repeated application of Claim~3.13a shows that for every $f \in
\pD{n, \vec{D}}$, $\vec{d} \in \vec{D}$, and every transitive Hamiltonian path $\vec{d}^{*}$
in $G$, we have $\vec{d}^{(k)}$ as in \autoref{algo:bubble} as
decomposition degree sequence. In particular, $\vec{d}^{(c)} = \vec{d}^{*}$.
\end{proof}

\subsection{The Decomposition of \large $\pD{n, \vec{D}}$}

Every directed graph admits a decomposition into strictly connected
components, where any two distinct vertices are connected by paths in
either direction.  Since a relation graph $G$ is the union of directed
complete graphs, its strictly connected components $G_{i}$, $1 \leq i
\leq \ell$, are again relation graphs and form a chain $G_{1} \gets
G_{2} \gets \dots \gets G_{\ell}$. \autoref{fig:conn_comp} shows the
connected components of the relation graphs \autoref{fig:rel-graph}

    \begin{sagesilent}
g3 = {2: [], 6: [7], 7: [6], 60: []}
g4 = {1: [], 2: [7], 3: [2, 5, 7], 5: [2, 3, 7], 7: [2, 3, 5], 12: []}

G3 = DiGraph(g3, multiedges=True, implementation='c_graph', vertex_labels=True)
G4 = DiGraph(g4, multiedges=True, implementation='c_graph', vertex_labels=True)
G4.relabel({1: r"2'"})

angle = N(0*pi/4)
rot_matrix = matrix([[cos(angle), -sin(angle)],[sin(angle), cos(angle)]])

new_pos3 = {}
new_pos3[2]  = (-1, 0)
new_pos3[6]  = ( 0, 1)
new_pos3[7]  = ( 0,-1)
new_pos3[60] = ( 1, 0)

new_pos4 = {}
new_pos4["2'"] = (-2, 0)
new_pos4[2]    = (-1, 0)
new_pos4[3]    = ( 0, 1)
new_pos4[5]    = ( 0,-1)
new_pos4[7]    = ( 1, 0)
new_pos4[12]   = ( 2, 0)
    \end{sagesilent}

\begin{figure*}
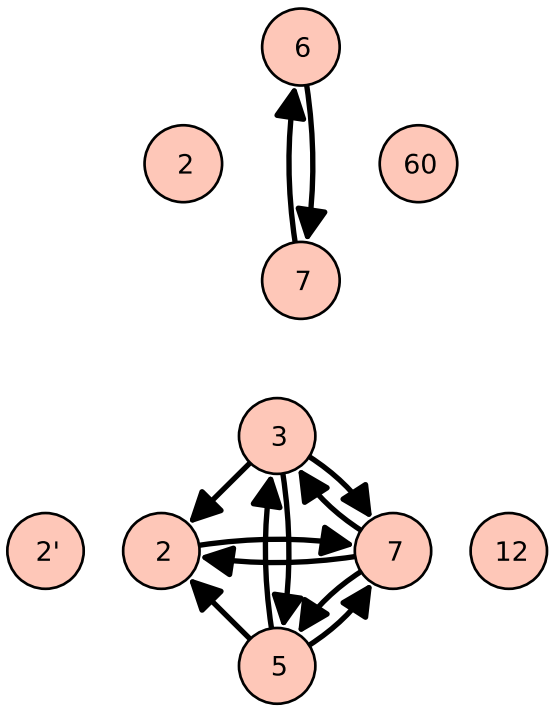

  \centering
    \sageplot[
    ]{G3.plot(
    vertex_size=7000,
    pos=new_pos3,
    figsize=[1.7, 1.7])}
    \sageplot[
    ]{G4.plot(
    vertex_size=7000,
    pos=new_pos4,
    figsize=[3, 2])}
  \caption{The three strongly connected components of each relation graph in
    \autoref{fig:rel-graph}, respectively.}
  \label{fig:conn_comp}
\end{figure*}

\begin{theorem}
  \label{thm:split} Let $G$ be a relation graph with strongly connected
  components $G_{1} \gets G_{2} \gets \dots \gets G_{\ell}$.  We have
  \begin{equation}
    \label{eq:11}
    \pD{G} = \pD{G_{1}} \circ \pD{G_{2}} \circ \dots \circ \pD{G_{\ell}}
  \end{equation}
  and for any $f \in \pD{G}$, we have uniquely determined $g_{i} \in
  \pD{G_{i}}$ such that $f = g_{1} \circ g_{2} \circ \dots \circ
  g_{\ell}$.  Furthermore, over a finite field $F = \Fq$ with $q$
  elements, we have
  \begin{equation}
    \label{eq:12}
    \# \pD{G} = \prod_{1 \leq i \leq \ell} \# \pD{G_{i}}.
  \end{equation}
\end{theorem}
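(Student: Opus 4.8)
The plan is to exhibit a bijection $\pD{G} \to \pD{G_{1}} \times \dots \times \pD{G_{\ell}}$, $f \mapsto (g_{1}, \dots, g_{\ell})$, with inverse $(g_{1}, \dots, g_{\ell}) \mapsto g_{1} \circ \dots \circ g_{\ell}$; the three assertions then follow, the last by counting over $\Fq$. Throughout we use the tame hypothesis $p \nmid n$, so that \autoref{pro:tortrat} is available. The first step is a graph-theoretic description of the transitive Hamiltonian paths of $G$. Since $G$ is a union of transitive tournaments on one vertex set, its underlying undirected graph is complete, so the condensation of $G$ (contract each strongly connected component) is an acyclic tournament, i.e.\ a linear order, namely $G_{1} \gets G_{2} \gets \dots \gets G_{\ell}$; in particular every edge of $G$ between $G_{i}$ and $G_{j}$ with $i < j$ runs from $G_{j}$ to $G_{i}$. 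Reading this off, I would show that a Hamiltonian path $e_{1} \gets \dots \gets e_{m}$ of $G$ (with $m = \#\uset{\vec{D}}$) is transitive if and only if it is a concatenation $\vec{p}_{1} \vec{p}_{2} \cdots \vec{p}_{\ell}$ in which each $\vec{p}_{i}$ is a transitive Hamiltonian path of $G_{i}$: for ``only if'', the edge $e_{a} \gets e_{b}$ for $a < b$ cannot cross from a higher- to a lower-indexed component, so the $e_{a}$ visit the components in order; for ``if'', intra-block edges come from transitivity of $\vec{p}_{i}$ and inter-block edges from the chain order. Each $G_{i}$ has such a path, since the restriction of any $\vec{d}^{(k)} \in \vec{D}$ to the vertices of $G_{i}$ is a transitive tournament contained in $G_{i}$.

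Write $n_{i}$ for the product of the vertices of $G_{i}$ and $L_{i} = n_{1} \cdots n_{i}$, so $L_{0} = 1$, $L_{\ell} = n$, and $\pD{G_{i}} \subseteq \pP{n_{i}}$. The second step is that a monic original polynomial has at most one monic original left component of any given degree: if $f = P \circ S = P' \circ S'$ with $\deg P = \deg P'$, then \autoref{pro:tortrat} (with $d = d^{*} = \deg P$, $e = e^{*} = \deg S$) produces a common left part of degree $\gcd(\deg P, \deg P') = \deg P$, forcing $P = P'$; the same proposition also gives that left composition by a non-constant monic original polynomial is injective. Hence, for $f \in \pD{G}$, any decomposition $f = g_{1} \circ \dots \circ g_{\ell}$ matching a path $\vec{p}_{1} \cdots \vec{p}_{\ell}$ has partial compositions $Q_{i} = g_{1} \circ \dots \circ g_{i}$, which are left components of $f$ of degree $L_{i}$ and hence depend only on $f$; then each $g_{i}$ is the unique solution of $Q_{i} = Q_{i-1} \circ g_{i}$. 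This already yields the uniqueness statement of the theorem.

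For ``$\supseteq$'' in \eqref{eq:11}: given $g_{i} \in \pD{G_{i}}$ and any transitive Hamiltonian path $\vec{p}_{1} \cdots \vec{p}_{\ell}$ of $G$, decompose each $g_{i}$ according to $\vec{p}_{i}$ and concatenate to obtain a decomposition of $g_{1} \circ \dots \circ g_{\ell}$ (a monic original polynomial of degree $n$) matching the chosen path; as the path was arbitrary, $g_{1} \circ \dots \circ g_{\ell} \in \pD{G}$. For ``$\subseteq$'' together with $g_{i} \in \pD{G_{i}}$: given $f \in \pD{G}$, fix one path $\vec{p}_{1} \cdots \vec{p}_{\ell}$ and group the components of the corresponding decomposition of $f$ block-wise, giving $f = g_{1} \circ \dots \circ g_{\ell}$ with $\deg g_{i} = n_{i}$; to check $g_{i} \in \pD{G_{i}}$, take any transitive Hamiltonian path $\vec{q}$ of $G_{i}$, apply the defining property of $\pD{G}$ to the path $\vec{p}_{1} \cdots \vec{p}_{i-1}\, \vec{q}\, \vec{p}_{i+1} \cdots \vec{p}_{\ell}$, and observe that by uniqueness of left components the resulting decomposition of $f$ has $Q_{i-1}$ and $Q_{i}$ as its partial left components of degrees $L_{i-1}$ and $L_{i}$; its $i$-th block therefore composes to the unique $g_{i}$ with $Q_{i} = Q_{i-1} \circ g_{i}$ and exhibits a decomposition of $g_{i}$ matching $\vec{q}$. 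Thus $f \mapsto (g_{1}, \dots, g_{\ell})$ and $(g_{1}, \dots, g_{\ell}) \mapsto g_{1} \circ \dots \circ g_{\ell}$ are mutually inverse, so \eqref{eq:12} follows by counting over $\Fq$. I expect the main obstacle to be this uniqueness of left components and the bookkeeping showing that paths differing only inside one block produce literally the same $g_{i}$; the graph-theoretic first step is routine once the condensation is recognized as a linear order.
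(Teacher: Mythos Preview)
Your proposal is correct and follows essentially the same approach as the paper: both hinge on the observation that the transitive Hamiltonian paths of $G$ are exactly the concatenations of transitive Hamiltonian paths of the $G_{i}$, and both derive uniqueness of the $g_{i}$ from the absence of equal-degree collisions in the tame case. You are somewhat more explicit than the paper in spelling out the bijection and in invoking \autoref{pro:tortrat} for uniqueness of left components, whereas the paper simply asserts the latter in one line; otherwise the arguments coincide.
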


\begin{proof}
  For $f \in \pP{n}$, where $n = \prod_{v \in G} v$, we show that the
  following are equivalent.
  \begin{ronumerate}
    \item\label{it:split:1} The polynomial $f$ has decomposition
      degree sequence $\vec{d}$ for every transitive Hamiltonian path
      $\vec{d}$ in G.
    \item\label{it:split:2} The polynomial $f$ has decomposition
      degree sequence $\vec{d} =
      \vec{d}_{1} \gets \vec{d}_{2} \gets \dots \gets \vec{d}_{\ell}$
      for every concatenation of transitive Hamiltonian paths
      $\vec{d}_{i}$ in $G_{i}$ for $1
      \leq i \leq \ell$.
  \end{ronumerate}

  Assume \ref{it:split:1} and let $\vec{d} =
  \vec{d}_{1} \gets \vec{d}_{2} \gets \dots \gets \vec{d}_{\ell}$ be
  the concatenation of transitive Hamiltonian paths $d_{i}$ in $G_{i}$
  for $1 \leq i \leq \ell$ as in \ref{it:split:2}. Then $\vec{d}_{i}$ is a Hamiltonian path
  in $G$. Since the underlying undirected graph of $G$ is complete, we
  have $\vec{d}_{i} \gets \vec{d}_{j}$ in $G$ for any vertices $d_{i}
  \in G_{i}$ and $d_{j} \in G_{j}$ in distinct strictly connected
  components with $i < j$. Thus $\vec{d}$ is also transitive and $f$
  has decomposition degree sequence $\vec{d}$ by
  \ref{it:split:1}.

  Conversely, assume \ref{it:split:2} and observe that the
  decomposition of $G$ into strictly connected components induces
  a decomposition of every transitive Hamiltonian path $\vec{d}$ in
  $G$ into Hamiltonian paths $\vec{d}_{i}$ in $G_{i}$. These are transitive,
  since transitivity is a local condition and $f$ has decomposition degree sequence $\vec{d}$ by
  \ref{it:split:2}.

  Uniqueness and thus the counting formula follow from the absence of
  equal-degree collisions in the tame case.
\end{proof}

We split the edge set $E$ of a strictly connected relation graph $G$
with vertices $V$ into its uni-directional edges $\dag{E} = \{(u,v)
\in E: (v,u) \notin E\}$ and its bi-directional edges (2-loops)
$\udg{E} = \{\{u,v\} \subseteq V \colon \{(u,v),(v,u)\} \in E\} = E
\setminus \dag{E}$.  We call the corresponding graphs on $V$ the
\emph{directed} and the \emph{undirected} subgraph of $G$, respectively.  The directed
subgraph of $G$ is a \emph{directed acyclic graph} since $G$ is the
union of transitive tournaments.  The undirected subgraph of $G$ is
connected.  It is also the union of the \emph{permutation graphs}
of $\sigma_{k}$, $1 \leq k \leq c$.

    \begin{sagesilent}
g4 = {2: [7], 3: [2, 5, 7], 5: [2, 3, 7], 7: [2, 3, 5]}

G4 = DiGraph(g4, multiedges=True, implementation='c_graph', vertex_labels=True)

new_pos4 = {}
new_pos4[2]    = (-1, 0)
new_pos4[3]    = ( 0, 1)
new_pos4[5]    = ( 0,-1)
new_pos4[7]    = ( 1, 0)
    \end{sagesilent}

\begin{figure*}
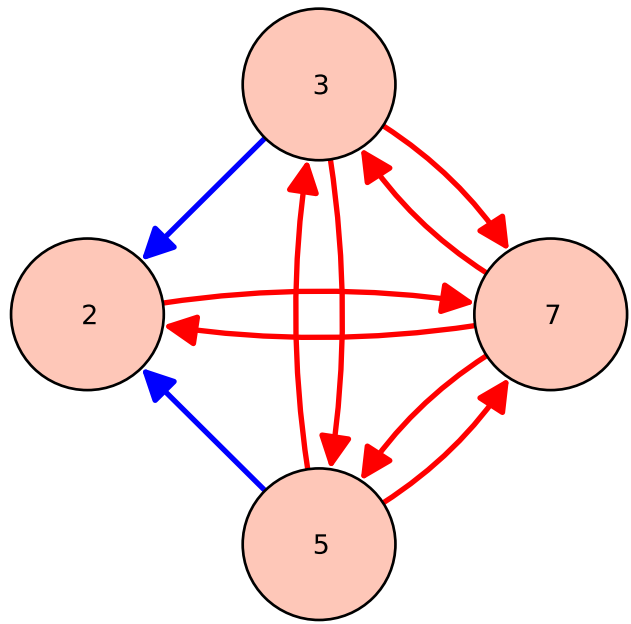

  \centering
    \sageplot[
    ]{G4.plot(
    edge_colors={'blue':[(3,2,None),(5,2,None)], 'red':[(2,7,None),(7,2,None),(3,5,None),(5,3,None),(3,7,None),(7,3,None),(5,7,None),(7,5,None)]},
    vertex_size=7000,
    pos=new_pos4,
    figsize=[3, 3])}
  \caption{The strongly connected component on 4 vertices of
    Figure~\ref{fig:conn_comp} decomposed into its undirected
    subgraph (red) and its directed subgraph (blue) with
    \textsc{Max-Sink}-sorting $7 \prec 2 \prec 5 \prec 3$.}
  \label{fig:dag_udg}
\end{figure*}

The directed subgraph $\dag{G}$ captures the requirements on the
position of the degrees in a decomposition sequence.  The undirected
subgraph $\udg{G}$ captures the admissible Ritt moves d'apr\`es
\cite{ziemue08} and thus the requirements on the shape of the
components.

Every directed acyclic graph admits a \emph{topological sorting}
$v_{1}, v_{2}, \dots, v_{\ell}$ of its vertices, where a directed edge
$v_{i} \gets v_{j}$ in $\dag{G}$ implies $i < j$, see
\citet[Section~22.4]{corlei09}. A directed acyclic graph may have
several distinct topological sortings. \cite{tar76} suggested to use
\textsc{Depth-First-Search} on $\dag{G}$. The time step, when
\textsc{Depth-First-Search} visits a vertex for the last time, is
called the \emph{finish time} of the vertex and listing the vertices
with increasing finish time yields a topological sorting. The result
is unique, if the tie-break rule for expanding in
\textsc{Depth-First-Search} is deterministic. We use the following
terminology.

Let $U(v)$ denote the open $\udg{G}$-neighborhood of a vertex $v$. It
is always nonempty. We call a vertex $v$ \emph{locally maximal}, if
its value is greater or equal than the value of every vertex in $U(v)$. Since vertices with
equal values are never connected by an edge in $\udg{G}$, a locally
maximal $v$ is always strictly greater than all vertices in
$U(v)$. Furthermore, there is at least one locally maximal vertex,
namely a ``globally'' maximal one. There is a unique enumeration of the locally maximal
vertices $d_{1}, d_{2}, \dots, d_{m}$ such that
  \begin{equation}
    \label{eq:46}
    d_{1} \gets d_{2} \gets \dots \gets d_{m}
  \end{equation}
is a directed path in $G$. Furthermore, we define for $1 \leq i \leq m$,
\begin{align}
  V_{i} & = U(d_{i}) \setminus U(d_{i+1}) \text{ and } W_{i} = V_{i} \cup
  \{d_{i}\}, \\
  V_{0} & = W_{0} = \{v \in G \colon \text{ no edge } d_{i}
  \gets v \text{ in } G \text{ for any } 1 \leq i \leq m\}, \\
  V_{m+1} & = W_{m+1} = \{v \in G \colon \text{ no edge } v \gets
  d_{i} \text{ in } G \text{
    for any } 1 \leq i \leq m\}.
\end{align}
The $W_{i}$, $0 \leq i \leq m + 1$, form a partition of all vertices
of $\dag{G}$ and we formulate the tie-break rule for
\textsc{Depth-First-Search} as follows. Given vertices $u \in W_{i}$
and $v \in W_{j}$ with $i < j$, the vertex $u$ is preferred. Given
vertices $u,v \in W_{i}$, the vertex with the larger value is
preferred. Since vertices with equal value are always connected by a
unidirectional edge in $\dag{G}$ due to \eqref{eq:13}, the search has
never to choose between to vertices with the same value and \textsc{Depth-First-Search} with this tie-break rule yields a unique
topological sorting. We call it the
\textsc{Max-Sink} topological sorting of
$\dag{G}$. \autoref{fig:dag_udg} shows the largest strongly connected component
of \autoref{fig:conn_comp} and its \textsc{Max-Sink} topological sorting.

\begin{theorem}
  \label{thm:class}
  Let $G$ be a strongly connected relation graph with
  at least two vertices, directed subgraph $\dag{G}$, and
  undirected subgraph $\udg{G}$.   Let $d_{1}, d_{2}, \dots, d_{\ell}$
  be the \textsc{Max-Sink} topological sorting of $\dag{G}$ and let $e_{i}$ be the
  product of all vertices in the open $\udg{G}$-neighborhood of
  $d_{i}$.  For every $f \in \pD{G}$ either \ref{it:class:1} or \ref{it:class:2}
  holds, and \ref{it:class:3} is also valid.
  \begin{ronumerate}
    \item (Exponential Case) \label{it:class:1} There are unique $g_{i} \in \pE{d_{i}, e_{i}}$
      for $1 \leq i \leq \ell$ and $a \in F$ such that
      \begin{equation}
        \label{eq:40}
        f = (g_{1} \circ g_{2} \circ \dots \circ g_{\ell})^{[a]}.
      \end{equation}
    \item (Trigonometric Case) \label{it:class:2} There are unique $z, a \in F$ with $z \neq 0$
      such that
      \begin{equation}
        \label{eq:41}
        f = T_{d_{1}d_{2}\cdots d_{\ell}}(x,z)^{[a]}.
      \end{equation}
    \item \label{it:class:3} If $\udg{G}$ contains no edge that connects
      two vertices both larger than 2, then the Trigonometric Case is
      included in the Exponential Case.  Otherwise, they are mutually exclusive.
  \end{ronumerate}
  Conversely,
  \begin{equation}
    \label{eq:18} \tag{3.15a}
    \pD{n, G} = \pT{d_{1}d_{2} \cdots d_{m},1}^{[F]} \cup ( \pE{d_{1},
      e_{1}} \circ \pE{d_{2}, e_{2}} \circ \dots \circ
    \pE{d_{m}, e_{m}})^{[F]}.
  \end{equation}
\end{theorem}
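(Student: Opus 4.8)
The plan is to prove the converse inclusion \eqref{eq:18} by a direct construction, to obtain the exponential/trigonometric dichotomy by peeling off $2$-collisions and invoking \autoref{cor:Ritt2}, and then to propagate the resulting local type across the connected undirected subgraph $\udg{G}$ while pinning down the exponents $e_{i}$.

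\emph{Reference decomposition and the easy inclusion.} I would first record that the transitive Hamiltonian paths of $G$ are exactly the topological sortings of $\dag{G}$: a unidirectional edge forces the order of its endpoints in every such path, while a bidirectional edge imposes no constraint. In particular the \textsc{Max-Sink} sorting $(d_{1},\dots,d_{\ell})$ is a transitive Hamiltonian path, so every $f\in\pD{G}$ has a decomposition $f=g_{1}\circ\dots\circ g_{\ell}$ with $\deg g_{i}=d_{i}$, and this decomposition is unique since tame decompositions have uniquely determined components (iterate the uniqueness underlying \eqref{eq:20}). For ``$\supseteq$'' in \eqref{eq:18} I would check by hand that both families on the right lie in $\pD{G}$: original shifting respects decompositions, $x^{ab}=x^{a}\circ x^{b}$ for every ordered factorization, and $T_{ab}(x,z)=T_{a}(x,z^{b})\circ T_{b}(x,z)$ nested in any way, so a shift of $T_{n}(x,z)$ has all the required degree sequences; for the exponential family the point is that $g_{i}\in\pE{d_{i},e_{i}}$ can ``absorb'' a power $x^{d_{j}}$ on its right for each $\udg{G}$-neighbour $d_{j}$ of $d_{i}$ via the identity $x^{k}w^{e}\circ x^{e}=x^{e}\circ x^{k}w(x^{e})$ of \eqref{eq:76}, and these absorptions, combined over the structure of $G$, realise every transitive Hamiltonian path.

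\emph{The dichotomy.} For the forward direction the base case $\ell=2$ is exactly \autoref{cor:Ritt2}: a strongly connected relation graph on two vertices is a single bidirectional edge $\{d,e\}$ with $d>e$, hence $\gcd(d,e)=1$, $\pD{G}=\pD{de,\{d,e\}}$, the \textsc{Max-Sink} data specialises as $d_{1}=d$, $e_{1}=e$, $\pE{e,d}=\{x^{e}\}$ by \eqref{eq:15}, and the criterion in \ref{it:class:3} reduces to $e=2$. For $\ell\geq 3$: for each bidirectional edge $\{d_{i},d_{j}\}$ of $\udg{G}$ I would produce a transitive Hamiltonian path in which $d_{i},d_{j}$ are adjacent — such a path exists because a bidirectional edge means $d_{i},d_{j}$ are incomparable in the acyclic graph $\dag{G}$ (a directed path between them would force their order in every tournament), so nothing is forced strictly between them — then split off the common left and right parts of the corresponding decompositions of $f$, again unique by tameness, and apply \autoref{thm:Ritt2} to the remaining pair of components of degrees $d_{i},d_{j}$. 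This yields, at each bidirectional edge, a local exponential or local trigonometric description; the bookkeeping of the swaps is the Ritt-move argument (Claim~3.13a) of the proof of \autoref{pro:13}.

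\emph{Propagation and shapes.} Two bidirectional edges $\{u,v\}$, $\{v,w\}$ share the degree-$v$ component $g_{v}$, which is a shifted Dickson polynomial in the trigonometric case and a shifted element of $\pE{v,\cdot}$ in the exponential case; by \ref{th:fifi-3} of \autoref{thm:Ritt2} these are incompatible unless one of the two endpoints is the vertex $2$ (where $T_{2}(x,z)=x^{2}-2z$ is itself a shifted monomial and the ``$e=2$'' overlap occurs). Connectedness of $\udg{G}$ then forces a single global type, except when every bidirectional edge meets $2$, in which case $\udg{G}$ is a star at $2$ and, as in the $e=2$ branch of \autoref{thm:Ritt2}, the trigonometric family is contained in the exponential one. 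In the trigonometric case the shifted Dickson components compose, by the uniqueness in \ref{th:fifi-2} of \autoref{thm:Ritt2} and the Dickson composition law, to $f=T_{n}(x,z)^{[a]}$ with $(z,a)$ unique, giving \ref{it:class:2}. In the exponential case I would fix the single original shift $a$ shared by all the applications — consistency of the shifts follows from $(g\circ h)^{[a]}=g^{[h(a)]}\circ h^{[a]}$, which tracks the shift through a decomposition — note that any non-locally-maximal component is the smaller partner in some Ritt collision, hence a shifted monomial (and indeed $\pE{v,e_{v}}=\{x^{v}\}$ there), and that each locally maximal $g_{d}$ must, after unshifting, match an element of $\pE{d,d_{j}}$ for every $\udg{G}$-neighbour $d_{j}$; combining these constraints forces $g_{d}\in\pE{d,e_{d}}$ with $e_{d}=\prod_{d_{j}\in U(d)}d_{j}$. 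Uniqueness of the $g_{i}$ and of $a$ then follows from the uniqueness in \autoref{cor:Ritt2}\,(i) and tameness, giving \ref{it:class:1}. Finally, for \ref{it:class:3}: if $\udg{G}$ has an edge with both endpoints larger than $2$, the two cases are already incompatible on that pair by \autoref{cor:Ritt2}\,(iii), hence globally mutually exclusive; otherwise we are in the star-at-$2$ situation and \ref{it:class:2} is subsumed in \ref{it:class:1}.

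\emph{Main obstacle.} I expect the crux to be pinning down the exponents $e_{d}$ for the locally maximal components while keeping the original shifts coherent. The naïve hope that $g_{d}\in\bigcap_{d_{j}\in U(d)}\pE{d,d_{j}}$ is not restrictive enough — already $\pE{7,2}\cap\pE{7,4}=\pE{7,4}\supsetneq\pE{7,8}=\{x^{7}\}$ — so one must combine the several Ritt moves through $d$, using derivatives and root multiplicities at the fixed points of the shifts in the spirit of the computation in the proof of \autoref{pro:6}, to eliminate the spurious common elements; simultaneously the exponential/trigonometric alternative has to be shown to propagate consistently over $\udg{G}$ and to interact correctly with the separate positional constraints recorded in $\dag{G}$. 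The converse inclusion in \eqref{eq:18}, though elementary, also needs care: one must verify that the abstractly defined exponential-composition family supplies a decomposition for \emph{every} transitive Hamiltonian path in $G$, not merely for the \textsc{Max-Sink} one.
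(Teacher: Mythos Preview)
Your overall architecture matches the paper's: both use the \textsc{Max-Sink} sorting as a reference decomposition, apply \autoref{thm:Ritt2} locally at bidirectional edges, and propagate the exponential/trigonometric type through the connectedness of $\udg{G}$. You also correctly locate the crux---pinning down the exponent $e_{d}$ for locally maximal vertices---and your counterexample $\pE{7,2}\cap\pE{7,4}=\pE{7,4}\supsetneq\pE{7,8}$ is apt.

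The gap is in your proposed resolution of that crux. You suggest attacking the intersection $\bigcap_{d_{j}\in U(d)}\pE{d,d_{j}}$ via derivatives and root multiplicities in the style of \autoref{pro:6}; this is not what the paper does, and it is not clear such an argument would succeed (the computation in \autoref{pro:6} separates a \emph{single} shifted $\pE{d,e}$ from its unshifted self, a different task from cutting the intersection down to $\pE{d,\prod d_{j}}$). The paper avoids the intersection altogether. Using \textsc{Bubble-Sort} together with \autoref{lem:1}---if $d_{i}$ and $d_{j}$ are $\udg{G}$-neighbours, then every vertex between them in the sorting is a $\udg{G}$-neighbour of one of them---all $\udg{G}$-neighbours of a locally maximal $d_{i}$ are moved into a contiguous block adjacent to $d_{i}$ and then composed into a single component of degree $e_{i}=\prod_{d_{j}\in U(d_{i})} d_{j}$. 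This yields \emph{two} decomposition degree sequences, one containing $(d_{i},e_{i})$ and one containing $(e_{i},d_{i})$ at the same position, and a single application of \autoref{thm:Ritt2} to that pair gives $g_{i}\in\pE{d_{i},e_{i}}$ directly. Propagation of the exponential/trigonometric type is handled by the structural fact that any two locally maximal vertices share a common $\udg{G}$-neighbour, so the case chosen at one maximal vertex forces the same case at every other; your edge-by-edge propagation via shared components is morally the same but would need this fact to make the chain of overlaps reach all maximal vertices.
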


The $e_{i}$ are well-defined, since there are no empty neighborhoods
in the connected graph $\udg{G}$ with at least two vertices.

\begin{proof}
  We begin with the proof of existence, then show uniqueness and
  conclude with the ``converse'' \eqref{eq:18}.

  The \textsc{Max-Sink} topological sorting $d_{1}, d_{2}, \dots,
  d_{\ell}$ of $\dag{G}$ yields a transitive Hamiltonian path
  \begin{equation}
    \label{eq:47}
    \vec{d} = d_{1} \prec d_{2} \prec \dots \prec d_{\ell}
  \end{equation}
  in $G$. For the rest of the proof, we identify the \textsc{max-sink}
  topological sorting with the corresponding transitive Hamiltonian path.

  We (re)label the locally maximally vertices $d_{1},
  d_{2}, \dots, d_{m}$ and the elements of $V_{i}$ as $d_{i}^{(1)},
  d_{i}^{(2)}, \dots, d_{i}^{(\ell_{i})}$ for $0 \leq i \leq m+1$ and
  $\ell_{i} = \# V_{i}$ such that
  \begin{align}
    \vec{d} & = (d_{0}^{(1)}, \dots, d_{0}^{(\ell_{0})}, d_{1}, d_{1}^{(1)}, \dots,
    d_{1}^{(\ell_{1})}, d_{2}, d_{2}^{(1)}, \dots, \\
    & \quad d_{m-1}^{(\ell_{m-1})},
    d_{m}, d_{m}^{(1)}, \dots, d_{m}^{(\ell_{m})}, d_{m+1}^{(1)},
    \dots, d_{m+1}^{(\ell_{m+1})}) \\
    & = (V_{0}, d_{1}, V_{1}, d_{2}, \dots, d_{m}, V_{m}, V_{m+1}),
  \end{align}
  where the $V_{i}$ are read as tuple $(d_{i}^{(1)},
  d_{i}^{(2)}, \dots, d_{i}^{(\ell_{i})})$.
  Then $f$ has a decomposition
  \begin{align}
    \label{eq:90} \tag{3.15b}
    f & = G_{0}^{(1)} \circ \dots \circ G_{0}^{(\ell_{0})} \circ G_{1}
    \circ G_{1}^{(1)} \circ \dots \circ G_{1}^{(\ell_{1})} \circ G_{2}
    \circ G_{2}^{(1)} \circ \dots \\
    & \quad \quad \circ G_{m-1}^{(\ell_{m-1})} \circ
    G_{m} \circ G_{m}^{(1)} \circ \dots \circ G_{m}^{(\ell_{m})} \circ
    G_{m+1}^{(1)} \circ \dots \circ G_{m+1}^{(\ell_{m+1})}
  \end{align}
  with $G_{i}^{(j)} \in \pP{d_{i}^{(j)}}$ for $0 \leq i \leq m+1$, $1
  \leq j \leq \ell_{i}$, and $G_{i} \in \pP{d_{i}}$ for $1 \leq i \leq m$.

  We assume for the moment that all edges in $\udg{G}$ contain a
  $2$. Then \autoref{thm:Ritt2} reduces to the exponential case, and
  we proceed as follows. First, we show that every $G_{i}^{(j)}$ for
  $1 \leq i \leq m$, $1 \leq j \leq \ell_{i}$, is of the form
  $g_{i}^{[a_{i}]}$ for unique $g_{i} \in \pE{d_{i}^{(j)}, e_{i}^{(j)}}$ and unique
  $a_{i} \in F$. Then, we extend this to $i = 0$ and $i =
  m+1$. Finally, we show that the shifting parameters $a_{i}$ are
  ``compatible'' such that a single shifting parameter $a$ suffices.

  For every $1 \leq i \leq m$, we use \textsc{Bubble-Sort}
  \autoref{algo:bubble} with \autoref{lem:1} to obtain the
  decomposition degree sequence
  \begin{equation}
    (V_{0}, d_{1}, V_{1}, \dots, d_{i}, V_{i}, d_{i}^{(\ell_{i+1})}, \dots
    d_{i}^{(m_{i})}, d_{i+1}, \hat{V}_{i+1}, \dots, d_{m}, \hat{V}_{m}, V_{m+1}),
  \end{equation}
  where $U(d_{i}) = V_{i} \cup \{d_{i}^{(\ell_{i+1})}, \dots
    d_{i}^{(m_{i})}\}$ and the latter elements have been omitted from
    $V_{i+1}, \dots, V_{m}$. We have $e_{i} = \prod_{1 \leq j \leq
      m_{i}} d_{i}^{(j)}$ and this implies the two decomposition
    degree sequences
  \begin{gather}
    (V_{0}, d_{1}, V_{1}, \dots, d_{i}, e_{i}, d_{i+1}, \hat{V}_{i+1}, \dots, d_{m}, \hat{V}_{m}, V_{m+1}),
    (V_{0}, d_{1}, V_{1}, \dots, e_{i}, d_{i}, d_{i+1}, \hat{V}_{i+1}, \dots, d_{m}, \hat{V}_{m}, V_{m+1}).
  \end{gather}
  Thus, there are unique $g_{i} \in \pE{d_{i}, e_{i}}$ and $a_{i} \in F$
  such that in \eqref{eq:90}, we have
  \begin{equation}
    \label{eq:49}
    G_{i} \circ G_{i}^{(1)} \circ \dots \circ G_{i}^{(\ell_{i})} =
    (g_{i} \circ x^{d_{i}^{(1)}} \circ \dots \circ x^{(d_{i}^{(\ell_{i})})})^{[a_{i}]}.
  \end{equation}
  The same form applies to $i = 0$, since there is at
  least one $d_{i}^{(j)}$ with $1 \leq i \leq m$, $1 \leq j \leq
  \ell_{i}$ that is in the $\udg{G}$-neighborhood of some element of
  $V_{0}$ due to the strong connectedness of $G$. And since there is
  no locally maximal element in $V_{0}$ all components are of the form
  $x^{d_{0}^{(j)}}$ with possible some shift applied. Every connection
  in $G$ relates the corresponding shifting parameters and since $G$
  has a Hamiltonian path, they are all determined by a single choice.

  Now, for the general case, where some collisions may be
  trigonometric, but not exponential. For any two
  locally maximal vertices $d_{i}$ and $d_{j}$ there is some vertex $d
  \in U(d_{i}) \cap U(d_{j})$. This shows, that either all blocks fall
  into the exponential case or all blocks fall into the trigonometric
  case. The two cases are disjoint if and only if there is some edge
  in $\udg{G}$ that connects two vertices both with value greater than
  $2$.

The stabilizer of original shifting is
$\{0\}$ for nonlinear monic original polynomials and there are no
equal-degree collisions. Hence the representation is unique.

The converse \eqref{eq:18} is a direct computation.
\end{proof}

\begin{lemma}
  \label{lem:1}
  Let $i < j$ and $d_{j}$ in the open $\udg{G}$-neighborhood of
  $d_{i}$. Then for every $i < k < j$, we have $d_{k}$ in the open
  $\udg{G}$-neighborhood of $d_{i}$ or $d_{j}$ or both.
\end{lemma}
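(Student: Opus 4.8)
The plan is to argue by contradiction, translating the two graph-theoretic hypotheses into statements about the total orders $\prec_{\vec{e}}$, $\vec{e} \in \vec{D}$, whose union defines the relation graph $G$. So suppose $d_{j} \in U(d_{i})$ with $i < j$, and suppose for contradiction that there is some $i < k < j$ with $d_{k} \notin U(d_{i})$ and $d_{k} \notin U(d_{j})$.

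First I would use that the undirected graph underlying $G$ is complete, being a union of complete graphs on the common basis $\uset{\vec{D}}$: hence the edge between $d_{i}$ and $d_{k}$ lies in $\dag{G}$, and so does the edge between $d_{k}$ and $d_{j}$. Since $d_{1}, \dots, d_{\ell}$ is a topological sorting of $\dag{G}$, the orientations of these unidirectional edges are forced by the indices: $i < k$ gives $d_{i} \gets d_{k}$, and $k < j$ gives $d_{k} \gets d_{j}$. The key observation is then that a unidirectional edge $d_{a} \gets d_{b}$ of $G$ forces $d_{a} \prec_{\vec{e}} d_{b}$ for \emph{every} $\vec{e} \in \vec{D}$: both vertices lie in the common basis, so they are comparable in each $\vec{e}$, and the reverse $d_{b} \prec_{\vec{e}} d_{a}$ is excluded, since otherwise $d_{b} \gets d_{a}$ would be an edge of $G$ as well and the edge would be bidirectional.

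Now I would exploit the bidirectional edge between $d_{i}$ and $d_{j}$: from $d_{j} \in U(d_{i})$ there is some $\vec{e} \in \vec{D}$ with $d_{j} \prec_{\vec{e}} d_{i}$. Applying the previous observation to this $\vec{e}$ yields $d_{i} \prec_{\vec{e}} d_{k}$ and $d_{k} \prec_{\vec{e}} d_{j}$, hence $d_{i} \prec_{\vec{e}} d_{j}$ by transitivity of the total order $\prec_{\vec{e}}$. This contradicts $d_{j} \prec_{\vec{e}} d_{i}$, and the lemma follows.

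The argument is short; the only step needing care is the middle one, where one must be certain that a unidirectional edge of the \emph{union} graph $G$ records the same relative order in \emph{each} member of $\vec{D}$ — this is exactly where normalization of $\vec{D}$ (so that all ordered factorizations share the basis $\uset{\vec{D}}$, by \autoref{pro:7}) is used. I expect no further difficulty; in particular, the \textsc{Max-Sink} feature of the sorting $d_{1}, \dots, d_{\ell}$ plays no role here, only that it respects $\dag{G}$.
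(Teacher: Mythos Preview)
Your argument is correct and is essentially the same as the paper's: both exploit that each constituent tournament $G_{\vec{e}}$ is acyclic (equivalently, that each $\prec_{\vec{e}}$ is a strict total order), so the three relations $d_{i}\prec_{\vec{e}} d_{k}$, $d_{k}\prec_{\vec{e}} d_{j}$, $d_{j}\prec_{\vec{e}} d_{i}$ cannot coexist in a single $\vec{e}$, forcing one of the first two edges to be bidirectional. The paper compresses this into one sentence, while you spell out explicitly why a unidirectional edge of $G$ records the same order in \emph{every} $\vec{e}\in\vec{D}$ and why the topological sorting fixes the orientations; your remark that the \textsc{Max-Sink} refinement is irrelevant here is also accurate.
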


\begin{proof}
  The tournaments underlying $G$ are acyclic. Therefore, if $d_{i}
  \prec d_{k} \prec d_{j}$ and $d_{j} \prec d_{k}$, then at least one
  other edge is bidirectional, too.
\end{proof}

\section{Exact Counting of Decomposable Polynomials}
\label{sec:counting}

The classification of \autoref{thm:class} yields the exact number of decomposable polynomials at
degree $n$ over a finite field $\Fq$.

\begin{theorem}
  \label{thm:count} Let $G$ be a strongly connected relation graph
  with undirected subgraph $\udg{G}$.  Let $d_{1}, d_{2}, \dots,
  d_{\ell}$ be the vertices of $\udg{G}$ and $e_{i}$ be the product of
  all vertices in the (open) $\udg{G}$-neighborhood of $d_{i}$.  Let
  $\delta_{\udg{G},2}$ be $1$ if there is no edge in $\udg{G}$ between
  two vertices both larger than $2$ and let $\delta_{\udg{G},2}$ be
  $0$ otherwise.  Then
  \begin{equation}
    \label{eq:26}
    \# \pD{G} = \begin{cases*}
      q^{d-1} & if $G = \{d\}$, \\
      q \cdot (\prod_{d_{i} \in G} q^{\floor{d_{i}/e_{i}}} +
      (1- \delta_{\udg{G},2}) \cdot (q-1)) & otherwise.
    \end{cases*}
  \end{equation}
\end{theorem}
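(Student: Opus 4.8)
The plan is to read the count off the classification in \autoref{thm:class}; the argument is almost entirely bookkeeping, since that theorem already exhibits $\pD{G}$ as a union of two explicitly parametrized families.

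First I would dispose of the degenerate case: if $G = \{d\}$, then $\pD{G} = \pP{d}$ by definition, so $\#\pD{G} = q^{d-1}$ by \eqref{eq:42}. For the remainder assume $G$ is strongly connected with vertices $d_1,\dots,d_\ell$, $\ell \geq 2$, and put $n = d_1\cdots d_\ell \geq 4$. Since $\udg{G}$ is connected with at least two vertices, every open $\udg{G}$-neighborhood is nonempty, so each $e_i$ is a product of one or more divisors, each at least $2$; hence $e_i \geq 2$ for all $i$. By \autoref{thm:class} we have $\pD{G} = E \cup T$ with $E = (\pE{d_1,e_1}\circ\cdots\circ\pE{d_\ell,e_\ell})^{[F]}$ and $T = \pT{n,1}^{[F]}$. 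Next I would verify that the parametrizing maps $(g_1,\dots,g_\ell,a)\mapsto (g_1\circ\cdots\circ g_\ell)^{[a]}$ from $\pE{d_1,e_1}\times\cdots\times\pE{d_\ell,e_\ell}\times\Fq$ onto $E$, and $(z,a)\mapsto T_n(x,z)^{[a]}$ from $\Fq^\times\times\Fq$ onto $T$, are bijections: surjectivity is the definition of $E$ and $T$, while injectivity holds because any polynomial written in the form \eqref{eq:40} (resp.\ \eqref{eq:41}) automatically falls into case \autoref{thm:class}\ref{it:class:1} (resp.\ \ref{it:class:2}), so the uniqueness clause there forces the parameters to agree.

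Given the bijections, the counting is immediate. From the definition of $\pE{d,e}$ together with $e_i \geq 2$, the set $\pE{d_i,e_i}$ is $\{x^{k_i}w^{e_i} : w\text{ monic of degree }\floor{d_i/e_i}\}$, so $\#\pE{d_i,e_i} = q^{\floor{d_i/e_i}}$; this factor is $1$ exactly when $d_i < e_i$, which happens for every vertex that is not locally maximal (it has a strictly larger $\udg{G}$-neighbor, so $e_i > d_i$), which is precisely why the product over all vertices in the statement agrees with the shorter product over locally maximal vertices in \eqref{eq:18}. Hence $\#E = q\cdot\prod_{d_i\in G} q^{\floor{d_i/e_i}}$ and $\#T = \#(\Fq^\times\times\Fq) = q(q-1)$ (consistent with the value of $\#\pT{n,1}^{[\Fq]}$ in \autoref{pro:6}, using $n > 2$). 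To finish I would invoke \autoref{thm:class}\ref{it:class:3}: if $\delta_{\udg{G},2} = 1$ then $T \subseteq E$, so $\#\pD{G} = \#E$; if $\delta_{\udg{G},2} = 0$ then $E$ and $T$ are disjoint, so $\#\pD{G} = \#E + \#T$. Both cases collapse to $q\bigl(\prod_{d_i\in G} q^{\floor{d_i/e_i}} + (1-\delta_{\udg{G},2})(q-1)\bigr)$, which is the claim.

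I do not expect a real obstacle: \autoref{thm:class} carries the weight. The two points that need genuine care are the injectivity of the parametrizations --- handled above via the ``either/or'' structure and the uniqueness clauses of \autoref{thm:class} --- and the reconciliation of index sets, i.e.\ making sure $\prod_{d_i\in G}$ in the statement matches the product over locally maximal vertices one reads off \eqref{eq:18}, which is exactly the observation that $\pE{d,e} = \{x^d\}$ contributes the trivial factor $q^{0}$ when $d < e$.
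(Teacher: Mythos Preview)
Your proposal is correct and follows essentially the same approach as the paper: both proofs simply read the count off \autoref{thm:class}, with the singleton case handled by \eqref{eq:42}. The paper's proof is a two-line appeal to that classification, whereas you spell out the bookkeeping (bijectivity of the parametrizations, the size of each $\pE{d_i,e_i}$, and the disjoint/contained dichotomy from part~\ref{it:class:3}), but there is no substantive difference in strategy.
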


\begin{proof}
  For $G = \{d\}$, this follows from \eqref{eq:42}. Otherwise from the
  (non)uniqueness of the parameters in
  \autoref{thm:class}.
\end{proof}

We are finally ready to employ the inclusion-exclusion formula
\eqref{eq:9} from the beginning.  For a nonempty set $D$ of nontrivial
divisors of $n$, it requires $\# \pD{n, D} = \# \pD{n, \vec{D}}$ for
$\vec{D} = \{(d, n/d) \colon d \in D\}$.  We compute the normalization
$\vec{D}^{*}$ by repeated application of \autoref{algo:refine} and
derive the relation graph of $\vec{D}^{*}$.  Then $\# \pD{n, \vec{D}}
= \# \pP{G}$ and the latter follows from \autoref{thm:split} and
\autoref{thm:count}.

\begin{algorithm2e}
\caption{\textsc{Count Decomposables}}
\label{algo:count}

\KwIn{positive integer $n$}
\KwOut{$\# \pDa{n}(\Fq)$ as a polynomial in $q$ for $n$ coprime to
  $q$}

\If{$n=1$ or $n$ is prime}{
\KwRet{$0$}
}
total $\gets 0$\;
$N \gets \{1 < d < n\colon d \mid n \}$\;
\For{$\emptyset \neq D \subseteq N$}{
$\vec{D} \gets \{(d, n/d) \colon d \in D\}$\;
$\vec{D}^{*} \gets \text{normalization of } \vec{D}$\;
$G \gets \text{relation graph of } \vec{D^{^{*}}}$\;
collisions $\gets 1$\;
  \For{strongly connected components $G_{j}$ of $G$}{
  $\udg{G}_{j} \gets \text{undirected subgraph of } G_{j}$\;
  \eIf{$G_{j} = \{ d \}$}{
  connected $\gets q^{d}$\;
  }{
  $\{d_{1}, d_{2}, \dots, d_{\ell}\} \gets G_{j}$\;
  \For{$i = 1, \dots, \ell$}{
    $U \gets \text{open neighborhood of } d_{i} \text{ in } \udg{G}_{j}$\;
    $e_{i} \gets \prod_{v \in U}$
  }
  connected $\gets \prod_{d_{i} \in G_{j}}
  q^{\floor{d_{i}/e_{i}}}$\;
  \If{some edge in $\udg{G}_{j}$ connects two vertices both larger than
    $2$}
{
connected $\gets \text{connected} + q-1$\;
}
  connected $\gets \text{connected} \cdot q$\;
  }
collisions $\gets \text{collisions} \cdot \text{connected}$\;
}
$k \gets \# D$\;
total $\gets \text{total} + (-1)^{k} \text{collisions}$\;
}
\KwRet{total}
\end{algorithm2e}

This is easy to implement, see \autoref{algo:count}, and yields the
exact expressions for $\# \pDa{n} (\Fq)$ at lightning speed, see
\autoref{tab:exact}.  Where no exact expression was previously known,
we compare this to the upper and lower bounds of \cite{gat08c}.

\begin{table*}
\centering
\caption{Exact values of $\# \pDa{n} (\Fq)$ in the tame case for
  composite $n \leq 50$, consistent with the upper and lower bounds
  (in the last column) or exact values (no entry in the last column) of \citet[Theorem~5.2]{gat08c}.}
\label{tab:exact}
\end{table*}

\section{Conclusion}
\label{sec:conclusion}

We presented a normal form for multi-collisions of decompositions of
arbitrary length with exact description of the (non)uniqueness of the
parameters.  This lead to an efficiently computable formula for the
exact number of such collisions at degree $n$ over a finite field of
characteristic coprime to $p$.  We concluded with an algorithm to
compute the exact number of decomposable polynomials at degree $n$
over a finite field $\Fq$ in the tame case.

We introduced the relation graph of a set of collisions which may be
of independent interest due to its connection to permutation graphs.
It would be interesting to characterize sets $\vec{D}$ of ordered
factorizations that lead to identical contributions $\# \pD{n,
  \vec{D}}$ and to quickly derive $\# \pD{n, \vec{D} \cup \{\vec{e}\}}$
form $\# \pD{n, \vec{D}}$ or conversely.  Finally, this
work deals with polynomials only and the study of rational functions
with the same methods remains open.

\section{Acknowledgements}
\label{sec:ack}

Many thanks go to Joachim von zur Gathen for useful discussions and pointers to the
literature.  This work was funded by the B-IT Foundation and the Land Nordrhein-
Westfalen.

\nocite{grakal03} 

\bibliography{journals,refs,lncs}
\bibliographystyle{cc2e}

\end{document}